\newcommand{\co}{\mathrm{c}_0}
\newcommand{\conv}{\mathrm{conv}}
\newcommand{\vertiii}[1]{{\left\vert\kern-0.35ex\left\vert\kern-0.35ex\left\vert #1 \right\vert\kern-0.35ex\right\vert\kern-0.35ex\right\vert}}
\theoremstyle{plain}
\newtheorem{thm}{Theorem}[section]
\newtheorem{prop}[thm]{Proposition}
\newtheorem{lem}[thm]{Lemma}
\newtheorem{cor}[thm]{Corollary}
\newtheorem{prob}[thm]{Problem}
\theoremstyle{definition}
\newtheorem{dfn}[thm]{Definition}
\theoremstyle{remark}
\newtheorem{rem}[thm]{Remark}
\numberwithin{equation}{section}
\begin{document}

\title[Weak compactness and FPP for affine bi-Lipschitz maps]{Weak compactness and fixed point property for affine bi-Lipschitz maps}

\author[C. S.~Barroso]{Cleon S.~Barroso}
\address{Departamento de Matem\'atica, Universidade Federal do Cear\'a,
Campus do Pici s/n, 60455-360 Fortaleza, Brazil.}
\email{cleonbar@mat.ufc.br}

\author[V. Ferreira]{Valdir Ferreira}
\address{Centro de Ci\^encias e Tecnologia, Universidade Federal do Cariri,
Cidade Universit\'aria s/n, 63048-080 Juazeiro do Norte, Brazil.}
\email{valdir.ferreira@ufca.edu.br}

\subjclass[2010]{47H10, 46B15 (primary), 46A50, 46B50 (secondary)}
\keywords{Fixed point property, weak compactness, uniformly Lipschitz mappings, Banach space, Pe\l czy\'nski's property $(u)$, spreading basis}
\thanks{The research of the first-named author was supported in part by the Brazilian Grant 232883/2014-9 within the scope of Science Without Borders CAPES program. It was also partially supported by FUNCAP/CNPq/PRONEX Grant 00068.01.00/15. }

\begin{abstract}
Let $X$ be a Banach space and let $C$ be a closed convex bounded subset of $X$. It is proved that $C$ is weakly compact if, and only if, $C$ has the {\it generic}  fixed point property ($\mathcal{G}$-FPP) for the class of $L$-bi-Lipschitz affine mappings for every $L>1$. It is also proved that if $X$ has Pe\l czy\'nski's property $(u)$, then either $C$ is weakly compact, contains an $\ell_1$-sequence or a $\co$-summing basic sequence. In this case, weak compactness of $C$ is equivalent to the $\mathcal{G}$-FPP for the strengthened class of affine mappings that are uniformly bi-Lipschitz. We also introduce a generalized form of property $(u)$, called {\it property $(\mathfrak{su})$}, and use it to prove that if $X$ has property $(\mathfrak{su})$ then either $C$ is weakly compact or contains a wide-$(s)$ sequence which is uniformly shift equivalent. In this case, weak compactness in such spaces can also be characterized in terms of the $\mathcal{G}$-FPP for affine uniformly bi-Lipschitz mappings. It is also proved that every Banach space with a spreading basis has property $(\mathfrak{su})$, thus property $(\mathfrak{su})$ is stronger than property $(u)$. These results yield a significant strengthening of an important theorem of Benavides, Jap\'on-Pineda and Prus published in 2004. 
\end{abstract}

\maketitle

\section{Introduction}\label{sec:Int}
Describing and understanding topological phenomena remains one of the most active topics in functional analysis. The problem of describing weak compactness has so far particularly been a topic of great interest. In this paper we are concerned with the problem of whether weak compactness can be interpreted by the fixed point property (FPP). Recall that a topological space $C$ is said to have the FPP for a class $\mathcal{M}$ of maps if every $f\in \mathcal{M}$ with $f(C)\subset C$ has a fixed point. This problem has been studied from a number of topological viewpoints by several authors, see e.g. \cite{Kl,Flo,LS,DM,BKR,BPP} and references therein. The analysis of this problem in the metric context often leads to a fruitful interplay between the geometry and structural aspects of Banach spaces. This is witnessed in several works where weak compactness constitutes the FPP for affine nonexpansive mappings. A nonexpansive mapping is in other words nothing else but a $1$-Lipschitz mapping on a subset of a Banach space. For example, Lennard and Nezir \cite{LN} proved that if a Banach space $X$ contains a basic sequence $(x_n)$ which is asymptotically isometric to the $\co$-summing basis, then its closed convex hull $\overline{\conv}\big( \{ x_n\}\big)$ fails to have the FPP for affine nonexpansive mappings. It turns out that in such cases $\overline{\conv}\big( \{ x_n\}\big)$ is typically a non-weakly compact set.

An interesting relaxation of the FPP is the generic-FPP ($\mathcal{G}$-FPP), a notion first proposed in \cite{BPP}.  For a convex subset $M$ of a topological vector space $X$, denote by $\mathcal{B}(M)$ the family of all nonempty bounded, closed convex subsets of $M$.

\begin{dfn}[\cite{BPP}]\label{def:1sec1} A nonempty set $C\in\mathcal{B}(X)$ is said to have the $\mathcal{G}$-FPP for a class $\mathcal{M}$ of mappings if whenever $K\in \mathcal{B}(C)$ then every $f\in \mathcal{M}$ satisfying $f(K)\subset K$ has a fixed point.
\end{dfn}

There is quite a lot known on $\mathcal{G}$-FPP when $X$ is a Banach space. For instance, Dowling, Lennard and Turett \cite{DLT1,DLT2} proved for the case in which $X$ is either $\co$, $L_1(0,1)$ or $\ell_1$ that weakly compact sets $C\in \mathcal{B}(X)$ correspond precisely to those having the $\mathcal{G}$-FPP for affine nonexpansive maps. In  2004 Benavides, Jap\'on-Pineda and Prus proved, among other important results, the following facts.

\begin{thm}[Benavides, Jap\'on Pineda and Prus \cite{BPP}]\label{thm:BPP} Let $X$ be a Banach space and $C\in \mathcal{B}(X)$. Then
\begin{enumerate}
\renewcommand{\labelenumi}{(\roman{enumi})}
\item $C$ is weakly compact if and only if $C$ has the $\mathcal{G}$-FPP for continuous affine maps. 
\item If $X$ is either $\co$ (equipped with its usual supremum norm) or  $J_p$ (the James space), then $C$ is weakly compact if and only if $C$ has the $\mathcal{G}$-FPP for uniformly Lipschitzian affine maps.
\item If $X$ is an $L$-embedded Banach space, then $C$ is weakly compact if and only if it has the $\mathcal{G}$-FPP for nonexpansive affine mappings.
\end{enumerate}
\end{thm}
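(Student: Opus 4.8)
The plan is to treat each of the three equivalences by splitting it into the ``fixed point'' implication (weak compactness $\Rightarrow$ $\mathcal{G}$-FPP) and the ``converse'' implication ($\mathcal{G}$-FPP $\Rightarrow$ weak compactness). The forward implication can be handled uniformly for all three statements, since a nonexpansive affine map is uniformly Lipschitzian and a uniformly Lipschitzian map is continuous; it therefore suffices to prove it for \emph{continuous} affine maps, which is exactly the class in part (i). The converse implications are the substantive ones and demand, for each class, an explicit fixed-point-free self-map on a non-weakly-compact member of $\mathcal{B}(C)$.

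I would establish the forward implication as follows. Suppose $C$ is weakly compact and let $K\in\mathcal{B}(C)$; then $K$ is weakly compact and convex. Given a continuous affine $f\colon K\to K$, fix $x_0\in K$ and form the Ces\`aro averages
\[
 s_n=\frac1n\sum_{k=0}^{n-1}f^k(x_0).
\]
Since $f$ is affine, $f(s_n)-s_n=\tfrac1n\big(f^n(x_0)-x_0\big)\to 0$ in norm because $K$ is bounded. Passing to a weakly convergent subnet $s_{n_\alpha}\rightharpoonup x^\ast\in K$, we get $f(s_{n_\alpha})\rightharpoonup x^\ast$ as well. As the graph of $f$ is convex and norm-closed, hence weakly closed, the point $(x^\ast,x^\ast)$ lies in it, i.e. $f(x^\ast)=x^\ast$. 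This yields the $\mathcal{G}$-FPP and, a fortiori, the same for the smaller classes in (ii) and (iii).

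For the converse I would argue by contraposition: assuming $C$ is not weakly compact, I would produce some $K\in\mathcal{B}(C)$ and a fixed-point-free affine self-map of $K$ in the relevant class. The model is the summing-basis set in $\co$: on $K=\overline{\conv}\{\,e_1,e_1+e_2,\dots\,\}$ the map $f(x)=e_1+Sx$, with $S$ the right shift, is affine (indeed an isometry), maps $K$ into $K$, and has no fixed point since $x=e_1+Sx$ forces $x=(1,1,1,\dots)\notin\co$. For part (i) the task is to locate a shift-invariant convex structure of this kind inside any non-weakly-compact $C$: by the Eberlein--\v{S}mulian theorem one first extracts a sequence with no weak cluster point, and then, via Rosenthal's dichotomy, either an $\ell_1$-sequence or a nontrivial weakly Cauchy sequence whose weak$^\ast$-limit escapes $X$; either case supplies the summing-type geometry needed to define a continuous affine shift whose would-be fixed point lives in $X^{\ast\ast}\setminus X$.

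The main obstacle is carrying out the converse construction under the sharper regularity demanded by (ii) and (iii). For (ii) one must arrange the shift to be \emph{uniformly} Lipschitzian, $\sup_n\mathrm{Lip}(f^n)<\infty$; this is where the special geometry of $\co$ and of the James space $J_p$ enters, guaranteeing that a non-weakly-compact set always contains a summing-basis-like sequence on which the shift and all its iterates have uniformly bounded Lipschitz constants. For (iii) the requirement is strongest: the map must be \emph{nonexpansive}, and here the $L$-embedded hypothesis $X^{\ast\ast}=X\oplus_1 X_s$ is decisive, since it forces a non-weakly-compact convex subset to contain an asymptotically isometric copy of the $\ell_1$ (or $\co$-summing) basis, on which a genuine nonexpansive affine shift can be built. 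Verifying these extraction results, and checking that the resulting map is fixed-point-free with the correct Lipschitz bound, is the technical heart of the argument.
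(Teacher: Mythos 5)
First, a point of calibration: the paper does not prove this theorem at all --- it is quoted verbatim from Benavides, Jap\'on Pineda and Prus \cite{BPP}, and the only proof content the paper records is the remark (just after the statement) that norm-continuous affine maps are weakly continuous, so the forward implication of all three parts follows from the Schauder--Tychonoff fixed point theorem. Your forward direction is correct and complete, and in fact a more elementary substitute: the Ces\`aro averages satisfy $f(s_n)-s_n=\tfrac1n(f^n(x_0)-x_0)\to 0$ since $f$ is affine and $K$ is bounded, and the graph of $f$, being convex and norm-closed, is weakly closed by Mazur's theorem, so a weak subnet limit $x^*$ of $(s_n)$ is a fixed point. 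Your reduction of the forward halves of (ii) and (iii) to (i) is also correct.

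The genuine gap is that your converse halves are programs rather than proofs, and you say so yourself (``the technical heart''). Concretely: for (i), after extracting (via Eberlein--\v{S}mulian, Rosenthal's $\ell_1$-theorem, and Proposition \ref{prop:Ros1}) a wide-$(s)$ sequence $(x_n)$, the crux is to \emph{verify} that the unilateral shift $f\big(\sum_n t_n x_n\big)=\sum_n t_n x_{n+1}$ is norm-continuous on $K=\overline{\conv}\{x_n\}$; this is not automatic --- the Introduction of this very paper stresses that the right shift need not be bounded on $[x_n]$ (Gowers' space $G$) and that hereditarily indecomposable spaces admit no shift-equivalent basic sequences. Continuity on $K$ does hold, but only because $t_n\geq 0$, $\sum_n t_n=1$, and the summing-basis domination make the tail functionals $x\mapsto\sum_{n\geq k}t_n$ uniformly bounded, so nearby points of $K$ have uniformly small tail mass; this argument is entirely absent from your sketch. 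For (ii) the uniform-Lipschitz extraction in $\co$ and $J_p$ is asserted, not performed (note that in $J_p$ \cite{BPP} had to use the bilateral shift $f_1$ recalled in Section \ref{sec:3}, not the unilateral one). For (iii) your parenthetical alternative ``(or $\co$-summing)'' is actually false: $L$-embedded spaces are weakly sequentially complete and so contain no copy of $\co$; the only case is an asymptotically isometric $\ell_1$-sequence, whose existence in every non-weakly-compact $C\in\mathcal{B}(X)$ must be extracted from the decomposition $X^{**}=X\oplus_1 X_s$, a step you name but do not carry out. As it stands, only the easy direction of the theorem has been proved.
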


It was further proved in \cite{DLT2} that $\textrm{c}$ shares with $\co$ the characterization of weak compactness in terms of the $\mathcal{G}$-FPP for nonexpansive affine mappings. We refer the reader to \cite{JPS} for other related results. It is also worth stressing that norm-continuous affine maps are in fact weakly continuous. Thus, as already pointed out in \cite{BPP}, one direction of the statements in Theorem \ref{thm:BPP} easily follows from Schauder-Tychonoff's fixed point theorem. Recall that a map $f\colon C\to X$ is said to be uniformly Lipschitz if
\[
\sup_{x\neq y\in C,\, p\in \mathbb{N}} \frac{ \| f^p(x)  - f^p(y)\|}{\| x - y\|}<\infty,
\]
where $f^p$ denotes the $p^{\textrm{th}}$ iteration of the mapping $f$. In other terms, $f$ is uniformly Lipschitz whenever there is a constant $L>0$ such that
\[
\| f^p(x) - f^p( y)\| \leq L \| x - y \|\quad\textrm{for all } x, y\in C\quad\textrm{and}\quad p\in \mathbb{N}.
\]  
Clearly nonexpansive maps are uniformly Lipschitz. Henceforth we shall say that a map $f\colon C\to X$ is {\it uniformly bi-Lipschitz with constant $L\geq 1$} if there exist constants $c_1, c_2>0$ with $c_1^{-1} c_2\leq L$ and such that  
\[
c_1\| x - y\| \leq \| f^p(x) - f^p( y)\| \leq c_2 \| x - y \|\quad\textrm{for all } x, y\in C\quad\textrm{and}\quad p\in \mathbb{N}.
\]  
In this case we also simply say that $f$ is a {\it uniformly $L$-bi-Lipschitz mapping}. If the above inequality holds only true for $p=1$, then we will simply say that $f$ is {\it $L$-bi-Lipschitz}. Note that an isometry is just an $1$-bi-Lipschitz mapping and that the closer $L$ is to $1$, the closer $f$ is to be an isometry. 

\medskip 
At first sight one may be tempted to characterize weak compactness in terms of $\mathcal{G}$-FPP for nonexpansive maps. This is not however generally true. Indeed, in 2008 P.-K. Lin \cite{Lin} equipped $\ell_1$ with the norm
\[
\vertiii{ x }_{\mathscr{L}}=\sup_{k\in\mathbb{N}}\frac{8^k}{1+8^k}\sum_{n=k}^\infty | x(n)|\quad\textrm{for}\quad x=(x(n))_{n=1}^\infty \in \ell_1,
\]
and proved that every $C\in \mathcal{B}\big((\ell_1, \vertiii{\cdot}_{\mathscr{L}})\big)$ has FPP for nonexpansive maps. Hence the unit ball $B_{(\ell_1, \vertiii{\cdot}_{\mathscr{L}})}$ has the $\mathcal{G}$-$FPP$ for affine nonexpansive maps, but of course fails to be weakly compact.  Another interesting example is highlighted by the following result from the recent literature, due to T. Gallagher, C. Lennard and R. Popescu:

\begin{thm}[\cite{GLP}] Let $c$ be the Banach space of convergent scalar sequences. Then there exists a non-weakly compact set $C\in \mathcal{B}\big( (c, \|\cdot\|_\infty)\big)$ with the FPP for nonexpansive mappings. 
\end{thm}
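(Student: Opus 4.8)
The plan is to prove only the asserted \emph{existence}, so I would exhibit a single explicit set and verify the two properties by hand. A natural candidate is the set of nonnegative non-increasing sequences bounded by $1$,
\[
C = \{\, x=(x_n)_n \in c : 1 \geq x_1 \geq x_2 \geq \cdots \geq 0 \,\}.
\]
First I would record that $C$ is convex, bounded and norm-closed, and, crucially, that $C$ is compact and metrizable in the topology of coordinatewise convergence: it sits inside the compact metrizable cube $[0,1]^{\mathbb{N}}$, and the defining monotonicity and boundedness conditions pass to coordinatewise limits, while a coordinatewise limit of non-increasing bounded sequences is again non-increasing and bounded, hence convergent, so the limit stays in $c$ and in $C$. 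To see that $C$ is \emph{not} weakly compact I would invoke the description of weak convergence in $c$ (coordinatewise convergence \emph{together with} convergence of the scalar limits) applied to $y_n = (1,\ldots,1,0,0,\ldots)$ with $n$ ones: these lie in $C$ and converge coordinatewise to the constant sequence $\mathbf{1}=(1,1,\ldots)$, yet $\lim_k y_n(k)=0\not\to 1=\lim_k \mathbf{1}(k)$, so no subsequence of $(y_n)$ converges weakly, and by Eberlein--\v Smulian $C$ cannot be weakly compact.

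For the fixed point property, fix a nonexpansive $f\colon C\to C$. Since $0\in C$ and $C$ is convex, bounded and star-shaped about $0$, the maps $f_t=tf$ are strict contractions of $C$ into itself for $t\in(0,1)$, and their unique fixed points $x_t=tf(x_t)$ satisfy $\|x_t-f(x_t)\|_\infty=(1-t)\|f(x_t)\|_\infty\to 0$ as $t\uparrow 1$; this furnishes an approximate fixed point sequence $(z_n)\subset C$ with $\|z_n-f(z_n)\|_\infty\to 0$. The coordinatewise compactness established above is what replaces the missing weak compactness: after passing to a subsequence, $z_n\to z$ coordinatewise for some $z\in C$, and nonexpansiveness gives at once $\limsup_n\|z_n-f(z)\|_\infty\leq\limsup_n\|z_n-z\|_\infty$, so $f(z)$ performs at least as well as $z$ as an asymptotic center of $(z_n)$.

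The heart of the argument, and the step I expect to be the main obstacle, is upgrading this asymptotic centering into a genuine fixed point. Because the supremum norm is far from strictly convex, asymptotic centers in $C$ are generally not unique, so I cannot simply appeal to uniqueness. Instead I would run a Kirk-type minimal-set scheme carried out in the coordinatewise topology: intersections of coordinatewise-compact convex $f$-invariant subsets are again of this type and are nonempty by compactness, so Zorn's lemma yields a minimal nonempty closed convex coordinatewise-compact set $K\subseteq C$ with $\overline{\conv}\,f(K)=K$, together with, via a Goebel--Karlovitz-type lemma, a diametral approximate fixed point sequence in $K$. The decisive input special to $c$ --- precisely the feature that fails in $\co$, where the analogous set $\{x\in\co:1\geq x_1\geq x_2\geq\cdots\geq 0\}$ \emph{lacks} the fixed point property --- is that the constant sequences, which are the coordinatewise ``limits at infinity'' of elements of $C$, themselves belong to $c$: concretely the isometry $x\mapsto(1,x_1,x_2,\ldots)$ of $C$ has the fixed point $\mathbf{1}\in c$ exactly because $\mathbf{1}\in c$, whereas the same map escapes $\co$. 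I would exploit this phenomenon to rule out a minimal invariant set of positive diameter, showing that the monotone structure forces any diametral sequence to concentrate its oscillation in a way that, combined with the inequality $\limsup_n\|z_n-f(z)\|_\infty\leq\limsup_n\|z_n-z\|_\infty$, contradicts minimality; hence $K$ is a singleton and $f$ has a fixed point. Verifying the diametral estimate under the non-strictly-convex sup norm, and checking that the coordinatewise topology legitimately substitutes for weak compactness throughout the Goebel--Karlovitz argument, is where the genuine work lies.
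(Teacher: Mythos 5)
First, a point of order: the paper you are working from does not prove this theorem at all --- it is quoted from Gallagher--Lennard--Popescu \cite{GLP}, so the benchmark is their original argument, which is a lengthy, bespoke analysis of a concrete set (essentially the norm-closed convex hull of the summing basis of $c$ together with its pointwise limit $\mathbf{1}$, i.e.\ $\{x\in c : 1=x_1\geq x_2\geq\cdots\geq 0\}$), carried out by hand with estimates organized around the limit functional $\ell(x)=\lim_k x_k$, not by minimal-set machinery. Your preliminary steps are correct: your $C$ is convex, bounded, norm-closed and coordinatewise compact; the sequence $y_n$ together with the functional $\ell$ shows $C$ is not weakly compact; and the contraction trick producing an approximate fixed point sequence, plus the asymptotic-center inequality $\limsup_n\|z_n-f(z)\|_\infty\leq\limsup_n\|z_n-z\|_\infty$, are standard and sound. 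But the argument stops exactly where the theorem begins: ``rule out a minimal invariant set of positive diameter'' \emph{is} the entire content of the result, and you offer nothing for it beyond the assertion that the monotone structure ``forces'' a contradiction. As written, this is a plan, not a proof.

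Moreover, the scheme you propose would not run as described. (i) Coordinatewise compactness has no generic fixed-point force: on bounded sets it coincides with the weak$^*$ topology of $\ell_\infty\supset c$, and weak$^*$-compact (indeed weakly compact) convex sets can fail the FPP for nonexpansive maps --- embed $L_1[0,1]$ isometrically into $\ell_\infty$ and push forward Alspach's example. So the Kirk/Goebel--Karlovitz machinery cannot simply be transported to this topology; whatever is special about the set must carry the whole proof. (ii) Even the Zorn step breaks: norm-closed convex $f$-invariant subsets of your $C$ need not be coordinatewise compact. Concretely, $C_0=\{x\in C:\ell(x)=0\}$ is norm-closed and convex but not coordinatewise closed (your $y_n\in C_0$ converge coordinatewise to $\mathbf{1}\notin C_0$), and $C_0$ supports the fixed-point-free isometry $x\mapsto(1,x_1,x_2,\dots)$, whose only candidate fixed point $\mathbf{1}$ has limit $1$. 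Hence the class of sets over which you would minimize is not stable under $K\mapsto\overline{\conv}\,f(K)$ unless $f$ is coordinatewise continuous --- which nonexpansive maps need not be --- and your own set contains closed convex subsets on which any diametral argument \emph{must} fail, so a correct proof has to detect and exploit precisely what distinguishes $C$ from $C_0$; your outline never does. Finally, your candidate set is larger than the one treated in \cite{GLP} (it allows $x_1<1$ and contains $0$), and nothing in the proposal verifies that the FPP survives this enlargement; establishing that would be new work of essentially the same difficulty as the theorem itself.
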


It is natural therefore to ask whether weak compactness describes $\mathcal{G}$-FPP for the class of uniformly Lipschitz affine mappings in arbitrary Banach spaces. Precisely, the main focus of this works is the following problem.

\begin{prob}\label{prob:1} Let $X$ be a Banach space and $C\in \mathcal{B}(X)$. Assume that $C$ is not weakly compact. Does there exist a set $K\in \mathcal{B}(C)$ and a uniformly Lipschitz affine mapping $f\colon K\to K$ that is fixed-point free? 
\end{prob}

Let us point out that a positive answer would lead to the following characterization of weak compactness: {\it A closed convex bounded subset $C$ of a Banach space $X$ is weakly compact if and only if $C$ has the $\mathcal{G}$-FPP for the class of affine uniformly Lipschitz mappings}. 

One way to try solving Problem \ref{prob:1} would be to obtain a wide-$(s)$ sequence which uniformly dominates all of its subsequences; that is, a basic sequence $(x_n)$ such that for some positive constants $d$ and $D$ and every increasing sequence of integers $(n_i)\subset \mathbb{N}$, the following inequalities hold for all $n\in \mathbb{N}$ and all choice of scalars $(a_i)_{i=1}^n$
\begin{equation}\label{eqn:1int}
d\left| \sum_{i=1}^n a_i\right| \leq \left\| \sum_{i=1}^n a_i x_{n_i}\right\| \leq D \left\| \sum_{i=1}^n a_i x_i\right\|.
\end{equation}

This would certainly obstruct the $\mathcal{G}$-FPP for uniformly Lipschitz affine maps. As one knows, subsymmetric or quasi-subsymmetric basis (in the sense of \cite[Corollary 2.7]{ABDS}) are examples of such sequences. However due to unconditionality aspects, they might not be so available since unconditional basic sequences may not exist at all \cite{GM}. Another possibility would be trying to get wide-$(s)$ sequences $(x_n)$ that dominate all of their right shift sequences $(x_{n+p})$, but uniformly on $p$. This typically happens when special structures are available; for example, those equivalent  to $\co$ or $\ell_1$ as well (cf. also \cite[Theorem 1]{DLT1}, \cite[Theorem 4.2]{BPP}, \cite{LN} and \cite[Proposition 2.5.14]{MN}). Such a possibility would, though, imply  that shift operators induced by $(x_n)$ would be continuous. It turns out that this might be notoriously difficult, or even generally impossible. One reason is that the class of Hereditarily Indecomposable spaces (spaces that have no decomposable subspaces, cf. \cite{GM}) do not admit shift-equivalent basic sequences, that is, sequences $(x_n)$ which are equivalent to its one right-shift $(x_{n+1})$. Moreover, the Banach space $G$ was constructed by Gowers in \cite{G} has an unconditional basis for which the right shift operator is not norm-bounded. 

All these facts apparently show how difficult would be any approach producing an affirmative answer for Problem \ref{prob:1} using shift like mappings. 
 
The first goal of this paper is to solve Problem \ref{prob:1} for the class of $L$-bi-Lipschitz affine maps, where $L$ can be approached to one as much as possible. Precisely, it will be proved that if  $C\in \mathcal{B}(X)$ is not weakly compact then it fails to have $\mathcal{G}$-FPP for the class of $L$-bi-Lipschitz affine maps for every $L>1$ (Theorem \ref{thm:M1}). 

Let us stress that the basic idea behind the proof of Theorem \ref{thm:M1} is to build a basic sequence $(x_n)\subset C$ which dominates the summing basis of $\co$  and yet is equivalent to some of its convex combinations. This will give rise to a fixed-point free $L$-bi-Lipschitz affine mapping $f$ leaving invariant a set $K\in \mathcal{B}(C)$. As we shall see, the set $K$ is precisely the closed convex hull of $(x_n)$. As regards the map $f$, it will be essentially taken as the sum of a diagonal operator and a weighted shift map with properly chosen coefficients. This yields a new construction in metric fixed point theory and can make more transparent the challenges behind Problem \ref{prob:1}. The proof that $f$ is $L$-bi-Lipschitz relies strongly on the classical Principle of Small Perturbation. We also point out that our approach differs from that in \cite{BPP} where, because of the special nature of the spaces considered there, bilateral and right-shift maps were successfully used. 

Our second goal is to provide an affirmative answer to Problem \ref{prob:1} in spaces with Pe\l czy\'nski's property $(u)$ (Theorem \ref{thm:6.1}). The proof uses a local version of a classical result of R. C. James proved for spaces with unconditional basis (cf. Lemma \ref{lem:KL2}). 

The third and last goal of this work is to introduce a strengthened form of the Pe\l czy\'nski's property $(u)$, called {\it property $(\mathfrak{su})$}, and establish the $\mathcal{G}$-FPP in Banach spaces with such a property (Theorem \ref{thm:M3}). 

The remainder of the paper is organized as follows. In Section 2 we will set up the notation and terminology adopted in this work. In Section 3 we slightly recover a few ideas behind clever constructions of fixed-point free maps under the lack of weak compactness. In this section we also gather a set of auxiliary results used throughout the paper. In Section 4 we state and prove our first main result (Theorem \ref{thm:M1}). Section 5 contains a local version of a result of R. C. James which describes the internal structure of bounded, closed convex sets in spaces with property $(u)$. In Section 6 we formally state and prove the second main result of this paper (Theorem \ref{thm:6.1}). In Section 7 we introduce a theoretical notion (Definition \ref{dfn:2sec7}) which is a kind of shiftsification of the property $(u)$, and use it to also provide a structural description of bounded, closed convex sets in spaces having such a property. Finally, we indicate how to use it to prove our third main result (Theorem \ref{thm:M3}). In Section 8 we conclude this work with a few additional remarks and questions. 



\section{Notation and basic terminology}\label{sec2:Notation}
Throughout this paper $X$ will denote a Banach space. The notation used here is standard and mostly follows \cite{AK,C, D2}. In particular, a sequence $(x_n)$ in $X$ is called a basic sequence if it is a Schauder basis for its closed linear span $[x_n]$. In this case $\mathcal{K}$ will stand for the basis constant of $(x_n)$. Further, we will also denote by $P_n$ and $R_n$ the natural basis projections given by 
\[
P_n x= \sum_{i=1}^n x^*_i(x) x_i\quad \textrm{ and } \quad R_nx= x - P_nx,\quad x\in [x_n]
\] 
where $\{ x^*_i\}_{i=1}^\infty$ are the biorthogonal functionals of $(x_n)$. Recall that $\mathscr{K}:=\sup_n\| P_n\|$. As in \cite[p. 3]{BPP} we also recall that 
\begin{equation}\label{eqn:Theta}
\inf\big\{ \| x - y\| \colon x\in [x_i]_{i=1}^n, \,\, \| x\| \geq a\, \,\, y\in [x_i]_{i=n+1}^\infty, \,\,\, n\in \mathbb{N}\big\}\geq \frac{a}{\mathscr{K}},
\end{equation}
for every $a>0$. By $\mathrm{c}_{00}$ we denote the vector space of sequences of real numbers which eventually vanish. Let us now recall a few well-known notions from the Banach space theory. 

\begin{dfn}\label{def:2sec2} Let $(x_n)\subset X$ and $(y_n)\subset Y$ be two sequences, where $X, Y$ are Banach spaces. The sequence $(x_n)$ is said to dominate the sequence $(y_n)$ if there exists a constant $L>0$ so that 
\[
\Big\| \sum_{n=1}^\infty a_n y_n \Big\| \leq L \Big\| \sum_{n=1}^\infty a_n x_n \Big\|, 
\]
for all sequence $(a_n)\in \mathrm{c}_{00}$.
\end{dfn}

Observe that when $(x_n)$ and $(y_n)$ are both basic sequences, to say that $(x_n)$  dominates $(y_n)$ is the same as to say that the map $x_n\mapsto y_n$ extends to a linear bounded map between $[x_n]$ and $[y_n]$. The sequences $(x_n)$ and $(y_n)$ are said to be equivalent (also called $L$-equivalent, with $L\geq 1$) and one writes $(x_n)\sim_L (y_n)$, if for any $(a_i)\in \mathrm{c}_{00}$ one has that
\[
\frac{1}{L} \Big\| \sum_{i=1}^\infty a_i x_i \Big\| \leq \Big\| \sum_{i=1}^\infty a_i y_i \Big\| \leq L \Big\| \sum_{i=1}^\infty a_i x_i \Big\|. 
\]
The {\it summing basis} of $\co$ is the sequence $(s_n)_n$ in $\co$ where for $n\in \mathbb{N}$, $s_n$ is defined by
\[
s_n=e_1 + e_2 + \dots + e_n,
\]
and $(e_n)$ being the canonical basis of $\co$. It is well known that the sequence $(s_n)_n$ defines a Schauder basis for $( \co, \| \cdot\|_\infty)$. A sequence $(x_n)$ in a Banach space $X$ is then said to be equivalent to the summing basis of $\co$ if 
\[
(x_n)\sim_L (s_n)\,\, \textrm{ for some } L\geq 1.
\]

\begin{dfn} A sequence $(x_n)$ in $X$ is called  {\it seminormalized} if 
\[
0<\inf_n\| x_n\|\leq \sup_n\| x_n\|<\infty. 
\] 
\end{dfn}

The following additional notions were introduced by H. Rosenthal \cite{Ro}.

\begin{dfn}\label{defi:RosWids} A seminormalized sequence $(x_n)$ in $X$ is called:
\begin{enumerate}
\item[(i)] A non-trivial weak Cauchy sequence if it is weak Cauchy and non-weakly convergent. 
\item[(ii)] A wide-$(s)$ sequence if $(x_n)$ is basic and dominates the summing basis of $\co$.
\item[(iii)] An $(s)$-sequence if $(x_n)$ is weak-Cauchy and a wide-$(s)$ sequence.
\end{enumerate}
\end{dfn}

Finally, recall that a sequence of non-zero elements  $(z_n)$ of $X$ is called a convex block basis of a given sequence $(x_n)\subset X$ if there exist integers $n_1<n_2<\dots$ and scalars $c_1, c_2,\dots$ so that
\begin{enumerate}
\item[(iv)] $c_i\geq 0$ for all $i$ and $\sum_{i=n_j+1}^{n_{j+1}} c_i=1$ for all $j$.\vskip .1cm
\item[(v)] $z_j=\sum_{i=n_j + 1}^{n_{j+1}}c_i x_i$ for all $j$.
\end{enumerate}



\section{Auxiliary results}\label{sec:3}
The construction of affine fixed-point free maps usually relies on maps  which are defined by taking suitable convex combinations of some basic sequence $(x_n)$ in $X$. For example, in  \cite{BPP} the following maps were considered in the proof of Theorem \ref{thm:BPP}:
\[
f_0\Big( \sum_{n=1}^\infty t_n x_n\Big) = \sum_{n=1}^\infty t_n x_{n+1},
\]
and
\[
f_1\Big( \sum_{n=1}^\infty t_n x_n \Big) = t_2 x_1 + \sum_{n=1}^\infty t_{ 2n-1} x_{2n+1}  + \sum_{n=2}^\infty t_{2n} x_{2n-2}.
\]
It is interesting to mention that, according to the terminology of \cite{BPP}, $f_0$ and $f_1$ are respectively a unilateral shift and a bilateral shift map. 

As another instance, the authors in \cite{DLT2} have described weak compactness in $\co$ in terms of the $\mathcal{G}$-FPP for nonexpansive maps by considering the map:
\[
f_2\Big( \sum_{n=1}^\infty t_n x_n\Big) = \sum_{n\in \mathbb{N}} \sum_{j\in \mathbb{N}} \frac{1}{2^j} t_n x_{j+n}.
\]

If $X$ has a well-behaved structure then convex combinations like these ones can be dominated by $(x_n)$ which, broadly speaking, reflects on the FPP of such maps. Indeed, as we have mentioned before, one can always describe weak-compactness in terms of the $\mathcal{G}$-$FPP$ for  uniformly Lipschitz affine mappings when wide-$(s)$ sequences fulfilling (\ref{eqn:1int}) can be found. This is not, however, an easy task. Despite that, as we shall see, wide-$(s)$ sequences and the Principle of Small Perturbation are the main tools used here to prove our first result. Let us conclude this section by gathering a few auxiliary results that will be used throughout the paper.   

\begin{prop}[Proposition 2.2 \cite{Ro}]\label{prop:Ros1} Let $(x_j)$ be a non-trivial weak-Cauchy sequence in a Banach space. Then $(x_j)$ has an $(s)$-subsequence (and therefore a wide-$(s)$ sequence). 
\end{prop}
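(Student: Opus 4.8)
The plan is to pass to a subsequence enjoying the three defining features of an $(s)$-sequence: seminormalization, basicness, and domination of the summing basis of $\co$. Since subsequences of a weak-Cauchy sequence are again weak-Cauchy, only these three need to be produced. First I would record the easy structural facts. As $(x_j)$ is weak-Cauchy it is pointwise bounded on $X^*$, hence $\sup_j\|x_j\|<\infty$ by the uniform boundedness principle, and it converges weak-$*$ to some $x^{**}\in X^{**}$; non-weak-convergence means precisely $x^{**}\notin X$, so that $\delta:=\operatorname{dist}(x^{**},X)>0$. Seminormalization is then immediate: were $\inf_j\|x_j\|=0$, a norm-null subsequence would force $x^{**}=0\in X$, a contradiction; thus $0<\inf_j\|x_j\|\le\sup_j\|x_j\|<\infty$. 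Finally, a norm-convergent subsequence would converge weakly in $X$ and, by weak-Cauchyness, drag the whole sequence to a weak limit in $X$; hence $(x_j)$ has no norm-Cauchy subsequence, and by the standard Bessaga--Pe\l czy\'nski selection principle I may pass to a \emph{basic} subsequence, relabeled $(x_j)$. Being basic and seminormalized, its coordinate functionals satisfy a uniform bound $|a_i|\le C\big\|\sum_j a_j x_j\big\|$, a fact I will need later.

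The heart of the argument is the construction of uniformly bounded functionals that witness domination of the summing basis. I would build inductively a further subsequence $(y_k):=(x_{n_k})$ and functionals $(h_k)\subset X^*$ with $\sup_k\|h_k\|\le 2/\delta$ such that
\[
h_k(y_j)=0\ \ (j<k),\qquad |h_k(y_j)-1|\le\varepsilon_j\ \ (j\ge k),
\]
where $(\varepsilon_j)$ is a prescribed summable sequence. The vanishing on initial segments together with the uniform norm control is exactly where weak-Cauchyness and $x^{**}\notin X$ enter. At stage $k$, with $x_{n_1},\dots,x_{n_{k-1}}$ already chosen, I set $F=\operatorname{span}\{x_{n_1},\dots,x_{n_{k-1}}\}\subset X$ and use the identification $\|x^{**}|_{F^\perp}\|=\operatorname{dist}(x^{**},F)\ge\delta$ (valid because the finite-dimensional $F$ is weak-$*$ closed, so $(F^\perp)^\perp=F$) to select $g_k\in F^\perp$ with $\|g_k\|\le1$ and $x^{**}(g_k)\ge\delta/2$; normalizing by its weak-$*$ limit yields $h_k$ whose limit equals $1$ and with $\|h_k\|\le 2/\delta$. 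Since $h_i(x_m)\to1$ for each fixed $i$, I then choose $n_k>n_{k-1}$ large enough that $|h_i(x_{n_k})-1|<\varepsilon_k$ for all $i\le k$; this simultaneously secures the two displayed relations.

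With this system in hand the domination estimate is a short computation. For scalars $a_1,\dots,a_N$ and $v=\sum_{i=1}^N a_i y_i$, evaluating $h_k$ and using $h_k(y_i)=0$ for $i<k$ gives
\[
\Big|\sum_{i=k}^N a_i\Big|\le |h_k(v)|+\sum_{i=k}^N|a_i|\,|h_k(y_i)-1|\le \tfrac{2}{\delta}\|v\|+\sum_{i\ge k}|a_i|\varepsilon_i .
\]
Here the uniform coordinate bound $|a_i|\le C\|v\|$ coming from basicness turns the error into $C\|v\|\sum_i\varepsilon_i$, so that $\max_{1\le k\le N}\big|\sum_{i=k}^N a_i\big|\le L\|v\|$ with $L=\tfrac{2}{\delta}+C\sum_i\varepsilon_i$. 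Since the left-hand side is exactly $\big\|\sum_i a_i s_i\big\|_\infty$, the subsequence $(y_k)$ dominates the summing basis of $\co$; being seminormalized and basic it is a wide-$(s)$ sequence, and being weak-Cauchy it is an $(s)$-sequence, as required.

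I expect the main obstacle to be the construction step: keeping the norms $\|h_k\|$ \emph{uniformly} bounded while forcing the functionals to annihilate ever-longer initial segments. This is precisely why the positive distance $\delta=\operatorname{dist}(x^{**},X)$ and the weak-$*$ closedness of finite-dimensional subspaces are doing the essential work; without a uniform bound on the $h_k$ the summing-basis domination would collapse. The remaining bookkeeping — choosing $(\varepsilon_j)$ summable and invoking the coordinate bound — is routine perturbation, and the preliminary reductions (boundedness, seminormalization, basicness) are standard.
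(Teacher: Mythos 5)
First, a point of reference: the paper does not prove this proposition at all --- it is quoted verbatim from Rosenthal \cite{Ro} (Proposition 2.2 there) and used as a black box. So your proposal must be judged on its own merits, and on those merits it is incomplete. What you do well: the preliminary reductions (boundedness via uniform boundedness, $x^{**}\notin X$ so $\delta=\operatorname{dist}(x^{**},X)>0$, seminormalization) are correct, and your inductive construction of the functionals $h_k$ is sound --- the identification $\|x^{**}|_{F^\perp}\|=\operatorname{dist}(x^{**},F)\ge\delta$ for the finite-dimensional (hence weak-$*$ closed) span $F$ of the previously chosen terms is exactly the right mechanism, and given a \emph{basic} subsequence the domination estimate goes through as you wrote it.

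The genuine gap is the basicness step. The principle you invoke --- ``bounded with no norm-Cauchy subsequence, hence a basic subsequence by Bessaga--Pe\l czy\'nski'' --- is false as stated. In $\co$ take $u_n=e_1+e_n$, $n\ge 2$: this sequence is seminormalized with all pairwise distances equal to $1$, yet \emph{no} subsequence is basic, since for $a_1=\dots=a_k=1$, $a_{k+1}=\dots=a_{2k-1}=-1$ one has $\big\|\sum_{j=1}^{2k-1}a_j(e_1+e_{n_j})\big\|_\infty=1$ while the initial segment $\big\|\sum_{j=1}^{k}(e_1+e_{n_j})\big\|_\infty=k$, forcing the basis constant above $k$ for every $k$. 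The Bessaga--Pe\l czy\'nski selection principle requires a seminormalized \emph{weakly null} sequence; your sequence is not weakly null, and it cannot be recentered inside $X$ because its limit candidate $x^{**}$ lies outside $X$. Since ``basic'' is one half of the definition of a wide-$(s)$ sequence, waving it through with a misquoted selection principle begs a substantive part of the very proposition being proved; moreover your later coordinate bound $|a_i|\le C\|v\|$ leans on it. The standard repair is to run the Mazur--Kadec--Pe\l czy\'nski selection in $(X^{**},w^{*})$ on the differences $(x_j-x^{**})$, which are weak-$*$ null with norms $\ge\delta$, and then to transfer basicness back to $(x_{j_k})$ itself; this transfer is not automatic and needs the extra observation that $x^{**}$ stays at positive distance from the closed span of the differences --- a point which, pleasantly, your functionals can secure, since $h_1$ forces $\big|\sum_j a_j\big|\lesssim\big\|\sum_j a_j x_{j_k}\big\|$. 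Relatedly, your coordinate bound need not be borrowed from basicness at all: with $\sum_i\varepsilon_i\le 1/4$ the almost-biorthogonal differences $h_k-h_{k+1}$ give $|a_k|\le\frac{4}{\delta}\|v\|+2M\sum_{i>k}\varepsilon_i$ for $M=\max_i|a_i|$, and a bootstrap yields $M\le\frac{8}{\delta}\|v\|$ directly. With these two repairs your architecture does yield a complete proof; as written, it assumes what it most needs to prove.
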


\begin{thm}[Rosenthal $\ell_1$-theorem] Every bounded sequence in a Banach space has either a weak-Cauchy subsequence or a subsequence which is equivalent to the standard unit basis of $\ell_1$. 
\end{thm}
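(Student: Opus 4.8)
The plan is to prove the contrapositive: if a bounded sequence $(x_n)$ in $X$ has no weak-Cauchy subsequence, then some subsequence is equivalent to the unit vector basis of $\ell_1$. The first step is to recast the hypothesis analytically. View each $x_n$ as the weak${}^*$-continuous evaluation map $\hat x_n\colon B_{X^*}\to\mathbb{R}$, $\hat x_n(\varphi)=\varphi(x_n)$. Since a subsequence $(x_{n_k})$ is weak-Cauchy exactly when $(\varphi(x_{n_k}))_k$ converges for every $\varphi\in X^*$---that is, when $(\hat x_{n_k})_k$ converges pointwise on $B_{X^*}$---the assumption says precisely that the uniformly bounded family of functions $(\hat x_n)$ on the set $\Omega:=B_{X^*}$ admits no pointwise convergent subsequence. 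Thus the entire problem reduces to a combinatorial statement about oscillating sequences of bounded real functions.

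First I would isolate the combinatorial heart of the matter, namely the assertion that a uniformly bounded sequence $(f_n)$ of real functions on a set $\Omega$ having no pointwise convergent subsequence admits a subsequence $(f_{n_k})$ and rationals $r<s$ for which the sublevel and superlevel sets
\[
A_k=\{\omega\in\Omega\colon f_{n_k}(\omega)\le r\},\qquad B_k=\{\omega\in\Omega\colon f_{n_k}(\omega)\ge s\}
\]
form an \emph{independent} family, meaning that
\[
\Big(\bigcap_{k\in P}B_k\Big)\cap\Big(\bigcap_{k\in Q}A_k\Big)\neq\varnothing
\]
for every pair of disjoint finite sets $P,Q\subset\mathbb{N}$. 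This is the content of Rosenthal's combinatorial dichotomy for sequences of sets: after diagonalizing over the countably many rational pairs to fix $r<s$, one applies the alternative ``convergent or Boolean independent'' and discards the convergent branch, which would contradict the failure of pointwise convergence.

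Granting independence, I would deduce the $\ell_1$-estimate by the usual two-functional test. Fix scalars $a_1,\dots,a_m$ and split $\{1,\dots,m\}=P\cup Q$ according to the sign of $a_k$. By independence there exist $\varphi_1,\varphi_2\in\Omega=B_{X^*}$ with $\hat x_{n_k}(\varphi_1)\ge s$ for $k\in P$ and $\le r$ for $k\in Q$, and with the roles of $r,s$ reversed for $\varphi_2$. Subtracting the two evaluations of $\sum_k a_k x_{n_k}$ gives $(\varphi_1-\varphi_2)\big(\sum_k a_kx_{n_k}\big)\ge(s-r)\sum_{k=1}^m|a_k|$, and since $\|\varphi_1-\varphi_2\|\le 2$ this yields
\[
\Big\|\sum_{k=1}^m a_k x_{n_k}\Big\|\ \ge\ \frac{s-r}{2}\sum_{k=1}^m|a_k|.
\]
The reverse bound $\big\|\sum_k a_kx_{n_k}\big\|\le(\sup_n\|x_n\|)\sum_k|a_k|$ is immediate from the triangle inequality, so $(x_{n_k})$ is equivalent to the standard unit basis of $\ell_1$.

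The decisive obstacle is the combinatorial lemma of the second paragraph: manufacturing a Boolean independent subsequence from the bare failure of pointwise convergence. Its difficulty is genuine, since the underlying index set $\Omega=B_{X^*}$ is uncountable, so no naive diagonalization will force every finite intersection pattern to be nonempty; the standard route secures this through an iterated use of the infinite Ramsey theorem---coloring finite index sets by whether the associated level-set intersection is empty, passing to infinite monochromatic sets, and diagonalizing over the cardinalities---which is precisely the technical crux. With that lemma in hand, the two reductions flanking it are routine.
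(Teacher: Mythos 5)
The paper offers no proof of this statement at all: it is quoted verbatim as Rosenthal's classical $\ell_1$-theorem, with the argument deferred to the literature, so there is no internal proof to compare yours against. Judged on its own terms, your outline is the standard Rosenthal argument and it is sound. The translation of ``no weak-Cauchy subsequence'' into ``the evaluations $\hat x_n$ on $\Omega=B_{X^*}$ admit no pointwise convergent subsequence'' is exact, and the extraction of the lower $\ell_1$-estimate from Boolean independence is complete and correct: splitting $\{1,\dots,m\}$ by the sign of $a_k$, choosing $\varphi_1,\varphi_2\in B_{X^*}$ realizing the two complementary intersection patterns, and testing $\sum_k a_k x_{n_k}$ against $\varphi_1-\varphi_2$ (of norm at most $2$) gives $\bigl\|\sum_{k=1}^m a_k x_{n_k}\bigr\|\ge \frac{s-r}{2}\sum_{k=1}^m |a_k|$, with the upper bound trivial, exactly as you say.

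Two caveats. First, your account of how the rational pair $r<s$ gets fixed is compressed to the point of obscuring the logic: convergence of the level-set pairs for a \emph{single} pair $(r,s)$ does not by itself contradict the failure of pointwise convergence. The correct shape is a contradiction argument: if for every rational pair $(r,s)$ every subsequence of the disjoint pairs $(A_n,B_n)$ admitted a convergent sub-subsequence (convergent meaning each $\omega$ lies in at most finitely many $A_n$ or at most finitely many $B_n$), then diagonalizing over the countably many rational pairs would yield one subsequence along which the pairs converge for \emph{all} rational $r<s$ simultaneously, and that forces pointwise convergence of the corresponding functions, a contradiction; hence some single pair $(r,s)$ supports a subsequence whose set-pairs have no convergent sub-subsequence, and only then does the dichotomy deliver independence. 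Second, the combinatorial dichotomy itself is stated but not proved; you flag it explicitly as the crux and indicate the correct Ramsey-theoretic route, which is acceptable for a proposal, but a self-contained proof would have to supply it. Note also that the sign-splitting step is specific to real scalars (the complex case needs Dor's refinement); since the paper works over $\mathbb{R}$, this matches its setting.
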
 

The following existence result is also due to Rosenthal (\cite[Proposition 2]{Ro1}), the proof of which will be included here for reader's convenience

\begin{prop}\label{prop:selection} Let $X$ be a Banach space and $(y_n)$ be a seminormalized sequence in $X$. Assume that no subsequence of $(y_n)$ is weakly convergent. Then $(y_n)$ admits a wide-$(s)$ subsequence. 
\end{prop}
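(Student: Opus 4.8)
The plan is to reduce the statement to the two Rosenthal results already recorded above by means of Rosenthal's $\ell_1$-theorem, and then to check that each alternative in the resulting dichotomy furnishes a wide-$(s)$ subsequence. Since $(y_n)$ is seminormalized it is in particular bounded, so Rosenthal's $\ell_1$-theorem applies and we are in one of two cases: either $(y_n)$ has a subsequence equivalent to the unit vector basis of $\ell_1$, or it has a weak-Cauchy subsequence.

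In the first case, let $(y_{n_k})$ be a subsequence with $(y_{n_k})\sim_L (e_k)$, where $(e_k)$ denotes the unit vector basis of $\ell_1$. I would argue that this subsequence is already wide-$(s)$. It is seminormalized, being a subsequence of $(y_n)$, and it is basic, being equivalent to a basic sequence. The only clause of Definition \ref{defi:RosWids}(ii) needing verification is that it dominates the summing basis $(s_n)$ of $\co$. Expanding $\sum_{i=1}^n a_i s_i = \sum_{j=1}^n\big(\sum_{i=j}^n a_i\big)e_j$ in $\co$ gives $\big\|\sum_{i} a_i s_i\big\|_\infty = \max_{1\le j\le n}\big|\sum_{i=j}^n a_i\big| \le \sum_i |a_i| = \big\|\sum_i a_i e_i\big\|_{\ell_1}$, so the $\ell_1$-basis dominates $(s_n)$ with constant one; combining this with the $L$-equivalence $\big\|\sum_i a_i e_i\big\|_{\ell_1}\le L\big\|\sum_i a_i y_{n_i}\big\|$ shows that $(y_{n_k})$ dominates the summing basis of $\co$. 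Hence $(y_{n_k})$ is a wide-$(s)$ subsequence of $(y_n)$.

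In the second case, let $(y_{n_k})$ be a weak-Cauchy subsequence. Here the hypothesis enters: since no subsequence of $(y_n)$ is weakly convergent, the sequence $(y_{n_k})$ is weak-Cauchy and non-weakly convergent, i.e.\ a non-trivial weak-Cauchy sequence in the sense of Definition \ref{defi:RosWids}(i). Proposition \ref{prop:Ros1} then produces an $(s)$-subsequence of $(y_{n_k})$, which by Definition \ref{defi:RosWids}(iii) is in particular a wide-$(s)$ sequence, and this is the desired subsequence of $(y_n)$.

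The proof is thus essentially a matter of correctly dispatching the two alternatives of Rosenthal's $\ell_1$-theorem. I do not expect a genuine obstacle, since the substantial work is contained in Proposition \ref{prop:Ros1}; the only points requiring a little care are the elementary estimate showing that an $\ell_1$-subsequence dominates the summing basis of $\co$, and the observation that the no-weakly-convergent-subsequence hypothesis is exactly what upgrades an arbitrary weak-Cauchy subsequence to a \emph{non-trivial} one so that Proposition \ref{prop:Ros1} becomes applicable.
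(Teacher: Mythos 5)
Your proof is correct and takes essentially the same route as the paper's: both dispatch the two alternatives of Rosenthal's $\ell_1$-theorem, using the no-weakly-convergent-subsequence hypothesis to upgrade a weak-Cauchy subsequence to a non-trivial one and then applying Proposition \ref{prop:Ros1}, while handling the $\ell_1$ alternative directly. The only difference is cosmetic: you spell out the estimate $\bigl\| \sum_{i=1}^n a_i s_i \bigr\|_\infty = \max_{1\le j\le n}\bigl| \sum_{i=j}^n a_i \bigr| \le \sum_{i=1}^n |a_i|$ showing that an $\ell_1$-subsequence dominates the summing basis, which the paper dismisses as ``easy to see.''
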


\begin{proof}
If $(y_n)$ has no weak-Cauchy subsequence, then $(y_n)$ has an $\ell_1$-subsequence $(x_n)$ by the Rosenthal $\ell_1$-theorem. It is easy to see in this case that $(x_n)$ is wide-$(s)$. If otherwise $(y_n)$ has a weak-Cauchy subsequence $(y_{n_k})$, then from our assumption we get that $(y_{n_k})$ is a non-trivial weak-Cauchy sequence. By Proposition \ref{prop:Ros1}, $(y_{n_k})$ has an $(s)$-subsequence $(x_n)$ which is in particular wide-$(s)$. This concludes the proof. 
\end{proof}

Finally, we recall the Principle of Small Perturbations \cite[p. 13]{AK}.

\begin{thm}\label{thm:PSP} Let $(x_n)_{n=1}^\infty $ be a basic sequence in a Banach space $X$ with basis constant $\mathscr{K}$. If $(z_n)_{n=1}^\infty$ is a sequence in $X$ such that 
\begin{equation}\label{eqn:1PSP}
2\mathscr{K} \sum_{n=1}^\infty \dfrac{ \| x_n - z_n\|}{ \| x_n\| }=\theta <1,
\end{equation}
then there is an invertible bounded linear operator $A\colon X\to X$ with $A(x_n)=z_n$ for all $n\in \mathbb{N}$ and such that $\| A\| \leq 1 + \theta$ and $\| A^{-1}\| \leq (1 - 
\theta)^{-1}$. In particular, $(z_n)$ is a basic sequence and 
\begin{equation}\label{eqn:2PSP}
(1 - \theta) \Bigg\| \sum_{n=1}^\infty t_n x_n\Bigg\| \leq \Bigg\| \sum_{n=1}^\infty t_n z_n\Bigg\|\leq (1 + \theta) \Bigg\| \sum_{n=1}^\infty t_n x_n\Bigg\|,
\end{equation}
whenever $\sum_{n=1}^\infty t_n x_n$ converges. 
\end{thm}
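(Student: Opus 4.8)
The plan is to realize the operator $A$ as a small perturbation of the identity and to extract invertibility from a Neumann series argument. I would first produce, on all of $X$, a candidate operator $A = I + T$ in which $T$ encodes the difference between the two sequences, and then verify the claimed norm bounds.

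The first step is to control the coefficient functionals. The biorthogonal functionals $x_n^*$ are a priori defined only on $[x_n]$, and the decisive quantitative input is the estimate $\|x_n^*\|\,\|x_n\|\le 2\mathscr{K}$. This follows from $x_n^*(x)\,x_n = P_n x - P_{n-1}x$, so that
\[
|x_n^*(x)|\,\|x_n\| \le \|P_n x\| + \|P_{n-1}x\| \le 2\mathscr{K}\|x\|
\]
for every $x\in[x_n]$. By the Hahn--Banach theorem I would then extend each $x_n^*$ to a functional $\widetilde{x}_n^*$ on all of $X$ with $\|\widetilde{x}_n^*\| = \|x_n^*\| \le 2\mathscr{K}/\|x_n\|$, the extensions still satisfying $\widetilde{x}_n^*(x_m)=\delta_{nm}$.

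The second step is to define and bound the perturbation. Setting $Tx = \sum_n \widetilde{x}_n^*(x)(z_n - x_n)$ for $x\in X$, the functional bound together with hypothesis (\ref{eqn:1PSP}) yields
\[
\|Tx\| \le \sum_n |\widetilde{x}_n^*(x)|\,\|z_n - x_n\| \le \Big(2\mathscr{K}\sum_n \tfrac{\|z_n - x_n\|}{\|x_n\|}\Big)\|x\| = \theta\|x\|,
\]
so the series converges absolutely and $\|T\|\le\theta<1$. Consequently $A:=I+T$ satisfies $\|A\|\le 1+\theta$, and since $\|T\|<1$ the Neumann series $\sum_{k\ge 0}(-T)^k$ supplies a bounded inverse with $\|A^{-1}\|\le(1-\theta)^{-1}$. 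Biorthogonality gives $A x_n = x_n + (z_n - x_n) = z_n$. The two-sided estimate (\ref{eqn:2PSP}) is then immediate: for $x=\sum t_n x_n$ one has $\|\sum t_n z_n\| = \|Ax\| \le (1+\theta)\|x\|$, while $\|x\| = \|A^{-1}(Ax)\| \le (1-\theta)^{-1}\|\sum t_n z_n\|$; and $(z_n)$, as the image of the basic sequence $(x_n)$ under an isomorphism, is itself basic.

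I would expect the only genuinely delicate point to be the passage from $[x_n]$ to all of $X$: ensuring $A$ is globally defined while simultaneously preserving the norm bounds and the identities $A x_n = z_n$. This is resolved cleanly by the Hahn--Banach extension of the coordinate functionals, after which everything reduces to the routine Neumann series estimate. It is worth noting that the estimate $\|x_n^*\|\,\|x_n\|\le 2\mathscr{K}$ is precisely what makes the constant $2\mathscr{K}$ appearing in the hypothesis the correct normalization.
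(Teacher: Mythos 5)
Your proof is correct, and it is essentially the canonical argument: the paper itself states this theorem without proof, citing \cite{AK}, and your construction (Hahn--Banach extension of the coordinate functionals using $\|x_n^*\|\,\|x_n\|\le 2\mathscr{K}$, the perturbation $A=I+T$ with $\|T\|\le\theta<1$, and inversion by Neumann series) is exactly the standard proof given there. All the quantitative bounds check out, so there is nothing to add.
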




\section{The $\mathcal{G}$-FPP in arbitrary Banach spaces}\label{sec:4}
Our first main result reads as follows. 

\begin{thm}\label{thm:M1} Let $X$ be a Banach space and $C\in \mathcal{B}(X)$. Then $C$ is weakly compact if and only if $C$ has the $\mathcal{G}$-FPP for $L$-bi-Lipschitz affine mappings for every $L>1$. 
\end{thm}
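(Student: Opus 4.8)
\emph{Proof plan.} The plan is to prove the two implications separately. For the forward (easy) direction, suppose $C$ is weakly compact and fix $L>1$. Any $K\in\mathcal{B}(C)$ is a norm-closed convex subset of the weakly compact set $C$, hence itself weakly compact. An $L$-bi-Lipschitz affine map $f\colon K\to K$ is in particular norm-continuous and affine, so it is weakly continuous (as already recalled in the introduction); the Schauder--Tychonoff fixed point theorem, applied in the weak topology, then yields a fixed point. Thus $C$ has the $\mathcal{G}$-FPP for $L$-bi-Lipschitz affine maps for every $L>1$.

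For the converse I would argue by contraposition: assuming $C$ is not weakly compact, I construct, for each prescribed $L>1$, a set $K\in\mathcal{B}(C)$ and a fixed-point-free $L$-bi-Lipschitz affine map $f\colon K\to K$. Since $C$ is convex and norm-closed it is weakly closed, so by Eberlein--\v{S}mulian there is a sequence $(y_n)\subset C$ no subsequence of which converges weakly. Such a sequence is automatically seminormalized: $\sup_n\|y_n\|<\infty$ because $C$ is bounded, while $\inf_n\|y_n\|>0$, for otherwise a norm-null (hence weakly null) subsequence would exist. Proposition~\ref{prop:selection} then furnishes a wide-$(s)$ subsequence $(x_n)\subset C$, which is basic, seminormalized (say $\delta\le\|x_n\|\le M$), and dominates the summing basis $(s_n)$, say $\|\sum a_n s_n\|_\infty\le L_0\|\sum a_n x_n\|$ for all finitely supported $(a_n)$. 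I set $K:=\overline{\conv}\{x_n\}$, which lies in $C$ (being closed convex with $x_n\in C$) and belongs to $\mathcal{B}(C)$.

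Next I define the map. Choosing weights $\mu_n>0$ (to be fixed small) and $\lambda_n:=1-\mu_n$, I put $z_n:=\lambda_n x_n+\mu_n x_{n+1}$, the value $f(x_n)$ of a diagonal-plus-weighted-shift operator. Since each $z_n$ is a convex combination of $x_n$ and $x_{n+1}$, the linear extension $f(\sum t_n x_n)=\sum t_n z_n$ preserves the property ``coefficients nonnegative and summing to $1$'', so $f$ sends finite convex combinations of the $x_n$ into $\conv\{x_n\}$, and by continuity $f(K)\subset K$. Because $z_n-x_n=\mu_n(x_{n+1}-x_n)$ we have $\|x_n-z_n\|/\|x_n\|\le (2M/\delta)\mu_n$, so choosing $\sum_n\mu_n$ small enough makes $\theta:=2\mathscr{K}\sum_n\|x_n-z_n\|/\|x_n\|$ as small as desired. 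The Principle of Small Perturbation (Theorem~\ref{thm:PSP}) then provides an invertible $A\colon X\to X$ with $Ax_n=z_n$, $\|A\|\le 1+\theta$ and $\|A^{-1}\|\le(1-\theta)^{-1}$; restricting $f:=A|_K$ and using $f(x)-f(y)=A(x-y)$ gives $(1-\theta)\|x-y\|\le\|f(x)-f(y)\|\le(1+\theta)\|x-y\|$, so $f$ is $L$-bi-Lipschitz with constant $\tfrac{1+\theta}{1-\theta}$; choosing $\theta\le\tfrac{L-1}{L+1}$ realizes the prescribed $L$.

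Finally, for fixed-point freeness, if $u=\sum_n t_n x_n\in[x_n]$ satisfies $Au=u$, comparing the coordinate functionals $x_k^*$ on both sides gives $t_1(1-\mu_1)=t_1$ and $t_k(1-\mu_k)+t_{k-1}\mu_{k-1}=t_k$ for $k\ge2$, that is $t_k\mu_k=t_{k-1}\mu_{k-1}$; since $t_1\mu_1=0$ and every $\mu_k>0$, induction forces $t_k=0$ for all $k$, so $u=0$. It remains to see $0\notin K$: for any convex combination $u=\sum t_n x_n$, domination of the summing basis gives $\|u\|\ge L_0^{-1}\|\sum t_n s_n\|_\infty=L_0^{-1}\sum_n t_n=L_0^{-1}$, and this lower bound passes to the norm-closure $K$. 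Hence $0\notin K$ and $f$ has no fixed point in $K$, completing the contrapositive. I expect the genuine difficulty to lie not in any single estimate but in simultaneously balancing three competing demands: $f$ must send $K$ into $K$ (forcing the coefficients of $z_n$ to be nonnegative and sum to $1$), it must be $L$-bi-Lipschitz with $L$ arbitrarily close to $1$ (forcing $f$ to be a small perturbation of the identity, hence $\mu_n$ small), and it must be fixed-point free (which needs the weights $\mu_n$ strictly positive together with the summing-basis domination to keep $0$ out of $K$). The key insight is that an arbitrarily small but nonzero weighted shift added to a diagonal operator achieves all three at once, thereby circumventing the shift-equivalence obstructions discussed in the introduction.
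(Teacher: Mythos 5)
Your proposal is correct and follows essentially the same route as the paper's proof: Eberlein--\v{S}mulian plus Proposition~\ref{prop:selection} to extract a wide-$(s)$ sequence, $K=\overline{\conv}\{x_n\}$, the perturbed map $z_n=(1-\mu_n)x_n+\mu_n x_{n+1}$ with $\sum_n\mu_n$ small, and the Principle of Small Perturbation (Theorem~\ref{thm:PSP}) with $\theta\le\frac{L-1}{L+1}$ to get the $L$-bi-Lipschitz estimate. The only differences are presentational: you bypass the paper's Claim~1 (the series representation of $K$) by defining $f$ as the restriction of the operator $A$ and arguing invariance by continuity, and you spell out the fixed-point-freeness (coordinate induction plus $0\notin K$ via summing-basis domination) that the paper dismisses as clear --- both steps are valid.
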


\begin{proof}
As we have mentioned before, if $C$ is weakly compact then it has the $\mathcal{G}$-FPP for any class of norm-continuous affine maps. Thus only the converse direction needs to be proved. Assume then that $C$ is not weakly compact and fix any real number $L>1$. By Eberlein-\v{S}mulian's Theorem, we can find a sequence $(y_n)$ in $C$ with no weakly convergent subsequences. Let $(x_n)$ be the wide-$(s)$ subsequence of $(y_n)$ given by Proposition \ref{prop:selection}. In order to prove the failure of the $\mathcal{G}$-FPP we need to exhibit a set $K\in \mathcal{B}(C)$ and a fixed-point free $L$-bi-Lipschitz affine mapping $f\colon K\to K$. 

As regards the set $K$, we let $K=\overline{\conv}(\{ x_n\})$.  Before starting with the construction of $f$, we need to set up an useful formula for $K$.  

\paragraph{Claim 1:} $K=\Bigg\{ \displaystyle\sum_{n=1}^\infty t_n x_n  \,\colon\, \textrm{each } t_n\geq 0\, \textrm{ and }\, \sum_{n=1}^\infty t_n=1\Bigg\}$. 

\begin{proof}[Proof of Claim 1] Let 
\[
M= \Bigg\{ \sum_{n=1}^\infty t_n x_n  \,\colon\, \textrm{ each } t_n\geq 0 \textrm{ and } \sum_{n=1}^\infty t_n=1\Bigg\}. 
\]
Let $(s_n)$ denote the summing basis of $\co$. The fact that $M$ is closed can easily be deduced from the fact that the mapping $T\colon ( [ x_n], \| \cdot\|) \to (\co, \| \cdot\|_\infty)$ given by 
\[
T\Bigg( \sum_{n=1}^\infty t_n x_n\Bigg)= \sum_{n=1}^\infty t_n s_n 
\]
is continuous, since $(x_n)$ dominates the summing basis $(s_n)$. Note that 
\[
\overline{\conv}(s_n)=\Bigg\{ \sum_{n=1}^\infty t_n s_n \colon\,  \textrm{ each } t_n \geq 0\,\textrm{ and }\, \sum_{n=1}^\infty t_n=1\Bigg\}
\]
and $T^{-1}\big( \overline{\conv}(s_n)\big)= M$, as desired. Now, once $M$ is closed then it is clear that $M=K$. 
\end{proof}

With the set $K$ in hand, we proceed to construct the map $f$. Let $\mathscr{K}$ be the basis constant of $(x_n)$. Since $(x_n)$ is seminormalized  there are some reals $0< a< b$ such that $a\leq\| x_n\| \le b$ for every $n\in \mathbb{N}$. Pick any $\theta\in (0,1)$ so that $\frac{ 1+ \theta}{1 - \theta}\leq L$. Next choose a sequence of positive scalars $(\alpha_n)$ satisfying:
\[
\displaystyle\frac{ 4b\mathscr{K}}{a}\sum_{n=1}^\infty \alpha_n\leq \theta. 
\]
It is obvious that such numbers can be found. We then define $f\colon K \to K$ as follows: if $\sum_{n=1}^\infty t_n x_n\in K$ then
\[
f\Big( \sum_{n=1}^\infty t_n x_n \Big) = (1 - \alpha_1) t_1 x_1 + \sum_{n=2}^\infty \big( (1- \alpha_n)t_n + \alpha_{n-1}t_{n-1}\big) x_n.
\]
Clearly $f$ is an affine fixed point free self map of $K$. It remains to show that $f$ is $L$-bi-Lipschitz. In order to verify this, we let 
\[
z_n= (1 - \alpha_n) x_n  + \alpha_n x_{n+1},\quad n\in \mathbb{N}.
\]
Notice that $(z_n)$ is a non-trivial convex combination of $(x_n)$ for which one has that
\[
f(x) = \sum_{n=1}^\infty t_n z_n\quad \forall\,  x:=\sum_{n=1}^\infty t_n x_n \in K. 
\]

\paragraph{Claim 2:} $(z_n)$ fulfills assumption (\ref{eqn:1PSP}) in Theorem \ref{thm:PSP}.

\begin{proof}[Proof of Claim 2] Indeed, notice that
\[
\| x_n - z_n\| = \alpha_n \| x_n - x_{n+1}\| \leq \alpha_n 2b.
\]
Hence
\[
2\mathscr{K}\sum_{n=1}^\infty \frac{\| x_n - z_n \|}{\| x_n\|}\leq 2\mathscr{K}\sum_{n=1}^\infty \frac{2b\alpha_n}{a}=\frac{4b\mathscr{K}}{a}\sum_{n=1}^\infty \alpha_n\leq \theta<1.
\]
This establishes the claim.
\end{proof}
Therefore Theorem \ref{thm:PSP} implies $(z_n)$ is basic and is equivalent to $(x_n)$. In addition, condition (\ref{eqn:2PSP}) yields that 
\[
(1 - \theta)\| x - y\| \leq \| f(x) - f(y)\| \leq (1 + \theta)\| x + y\|
\]
for every $x, y\in K$ and $f$ is $L$-bi-Lipschitz. The proof of theorem is complete.
\end{proof}

\begin{rem} It is not difficult to check that given $L>1$ and $n\in \mathbb{N}$ we can find the previous fixed point free mapping in such a way that $f$ and all the iterates $f^i$ with $i\leq n$ are $L$-bi-Lipschitz. To see this it is enough to consider $\theta\in (0,1)$ such that $(\frac{1 +\theta}{1 - \theta})^n\leq L$. It is worth to mention also that the condition $L>1$ cannot be weakened to the case of isometries, since $\ell_1$ can be renormed so that its closed unit ball has the FPP for nonexpansive mappings \cite{Lin}.
\end{rem}

As a corollary we can give the following characterization of reflexivity in Banach spaces:

\begin{cor} A Banach space $X$ is reflexive if and only if its closed unit ball has the $\mathcal{G}$-FPP for $L$-bi-Lipschitz affine mappings for every $L>1$. 
\end{cor}



\section{Bounded, closed convex sets in spaces with property $(u)$}\label{sec:5}
Recognizing local structures in Banach spaces are relevant in the study of the metric fixed point theory. The main result of this section supplies a local version of a well-known result of R. C. James. It is concerned with the internal structure of bounded, closed convex sets in spaces with Pe\l czy\'nski's property $(u)$. 

\begin{dfn}[Pe\l czy\'nski]\label{def:PelU}  A Banach space $X$ is said to have Pe\l czy\'nski's property $(u)$ if for every weak Cauchy sequence $(x_n)$ in $X$, there exists a sequence $(y_n)\subset X$ satisfying the properties below:
\begin{enumerate}
\item $\sum_{n=1}^\infty y_n$ is weakly unconditionally Cauchy $(WUC)$ series, i.e
\[
\sum_{n=1}^\infty | x^* ( y_n) | <\infty \quad\textrm{ for all } x^*\in X^*.
\]
\item $(x_n - \sum_{i=1}^n y_i)_n$ converges weakly to zero. 
\end{enumerate}
\end{dfn}

\begin{rem} A few known facts are in order: Banach spaces with an unconditional basis have property $(u)$ (cf. \cite[Proposition 3.5.3]{AK}). Other examples of spaces satisfying the property $(u)$ can be found in \cite{GL} where, for instance, it is shown that $L$-embedded spaces enjoy this property.  
\end{rem}

The following proposition was first mentioned by Knaust and Odell in \cite[pp. 153--154]{KO}, we include the proof here for the sake of completeness. Given a sequence $(x_n)$ in $X$, recall that a sequence $(\mathfrak{X}_n)$ is called a {\it convex block sequence} of $(x_n)$ if there is a sequence of finite subsets of integers $(F_n)$ such that
\[
\max F_1< \min F_2\leq \max F_2< \min F_3<\dots < \max F_n< \min F_{n+1} < \dots
\]
together with sets of positive numbers $\{ \lambda^n_i\colon i\in F_n\}\subset [0,1]$ satisfying $\sum_{i\in F_n} \lambda^n_i=1$ and $\mathfrak{X}_n=\sum_{i\in F_n} \lambda^n_i x_i$. 

\begin{prop}\label{prop:KO} Let $X$ be a Banach space with property $(u)$. Then every non-trivial weak Cauchy sequence in $X$ has a convex block subsequence equivalent to the summing basis of $\co$. 
\end{prop}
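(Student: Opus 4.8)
The plan is to use property $(u)$ to rewrite the given terms as the partial sums of a weakly unconditionally Cauchy (WUC) series, up to a weakly null error, and then to read off the two one-sided estimates defining equivalence to $(s_n)$ from two different sources: the boundedness constant of the WUC series will supply domination \emph{by} the summing basis, while Rosenthal's wide-$(s)$ theory will supply domination \emph{of} it.

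Concretely, let $(x_n)$ be non-trivial weak Cauchy. Applying Definition~\ref{def:PelU} to $(x_n)$ I obtain $(y_n)$ with $\sum_n y_n$ WUC and $r_n := x_n - w_n \to 0$ weakly, where $w_n := \sum_{i=1}^n y_i$. Since $\sum_n y_n$ is WUC there is a constant $M$ with $\|\sum_j c_j y_j\| \le M \sup_j |c_j|$ for every finitely supported $(c_j)$. I first build convex blocks $\mathfrak{X}_n = \sum_{i\in F_n}\lambda_i^n x_i$ (with the $F_n$ consecutive finite intervals and $\lambda^n$ convex weights) by an inductive use of Mazur's theorem applied to the weakly null sequence $(r_i)$: having fixed $F_1,\dots,F_{n-1}$, I choose $F_n$ to the right of $F_{n-1}$ and $\lambda^n$ so that $\|R_n\| < 2^{-n}$, where $R_n := \sum_{i\in F_n}\lambda_i^n r_i$. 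Writing $W_n := \sum_{i\in F_n}\lambda_i^n w_i$, I then have $\mathfrak{X}_n = W_n + R_n$ with $\sum_n \|R_n\| < \infty$.

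The heart of the argument is the upper estimate for $(W_n)$, which in fact holds for \emph{any} convex blocks. Expanding $w_i = \sum_{j\le i} y_j$ gives $\sum_n a_n W_n = \sum_j c_j y_j$ with $c_j = \sum_n a_n \mu_j^n$ and $\mu_j^n := \sum_{i\in F_n,\, i\ge j}\lambda_i^n$. Because the $F_n$ are increasing intervals, for each $j$ one has $\mu_j^n = 1$ for every block lying entirely to the right of $j$, $\mu_j^n = 0$ for every block lying entirely to the left, and a single intermediate value $\mu_j^{n_0}\in[0,1]$ for the block $F_{n_0}$ meeting $j$; hence, writing $T_k := \sum_{n\ge k} a_n$,
\[
c_j = (1-\mu_j^{n_0})\,T_{n_0+1} + \mu_j^{n_0}\,T_{n_0},
\]
a convex combination of two consecutive tail sums. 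Consequently $|c_j| \le \max_k |T_k| = \|\sum_n a_n s_n\|_\infty$, and the WUC bound gives $\|\sum_n a_n W_n\| \le M\,\|\sum_n a_n s_n\|_\infty$. Adding the error term $\|\sum_n a_n R_n\| \le \sum_n |a_n|\,\|R_n\|$ and using $|a_n| = |T_n - T_{n+1}| \le 2\|\sum_m a_m s_m\|_\infty$, I obtain
\[
\Big\|\sum_n a_n \mathfrak{X}_n\Big\| \le \Big(M + 2\sum_n\|R_n\|\Big)\,\Big\|\sum_n a_n s_n\Big\|_\infty,
\]
so $(\mathfrak{X}_n)$ is dominated by the summing basis.

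For the reverse domination I note that $(\mathfrak{X}_n)$ is again a non-trivial weak Cauchy sequence: convex blocks share the weak-star limit of $(x_n)$ in $X^{**}$, which lies outside $X$; the same observation forces $\inf_n\|\mathfrak{X}_n\|>0$, so $(\mathfrak{X}_n)$ is seminormalized. Proposition~\ref{prop:Ros1} then provides an $(s)$-subsequence $(\mathfrak{X}_{m_k})$, which is in particular wide-$(s)$ and therefore dominates the summing basis. Since a subsequence of a convex block sequence is again one, and the upper estimate above is inherited by subsequences (the relevant coefficient functionals are precisely the tail sums of the restricted coefficients), $(\mathfrak{X}_{m_k})$ is equivalent to $(s_n)$, as required. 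The main obstacle is the convex-block computation producing the upper estimate; once one sees that convex blocks of the partial sums of a WUC series turn the coefficient functionals $c_j$ into convex combinations of consecutive tail sums, the WUC constant delivers domination by $(s_n)$ for free, and the only remaining care is the inductive Mazur selection keeping $\sum_n\|R_n\|$ finite.
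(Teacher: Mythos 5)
Your proof is correct, and it takes a genuinely different route from the paper's. The paper works in the opposite order: it first extracts a wide-$(s)$ subsequence $(x_{n_k})$ via Proposition~\ref{prop:Ros1}, so the lower estimate $c_1\sup_{k}\big|\sum_{i\geq k}a_i\big|\leq \big\|\sum_i a_i x_{n_i}\big\|$ is in place from the outset and is then checked to pass to convex blocks; the upper estimate is obtained by pairing convex blocks $(\mathfrak{X}_n)$ of $(x_{n_k})$ with convex blocks $(\mathfrak{Z}_n)$ of the partial sums $(z_{n_k})$, with the quantitative smallness $\sum_i\|\mathfrak{X}_i-\mathfrak{Z}_i\|\leq c_1/(4\mathscr{K})$ calibrated so that a perturbation argument (using the basis constant and $\|\mathfrak{X}_i\|\geq c_1$) yields $\big\|\sum_i a_i\mathfrak{X}_i\big\|\leq 2\big\|\sum_i a_i\mathfrak{Z}_i\big\|$, and finally $(\mathfrak{Z}_i)$ is dominated by $(s_n)$ via a norming functional $x^*$ and a selection of indices $\kappa_i\in F_i$ maximizing $a_i x^*(z_{n_k})$, reducing to a telescoping estimate along a subsequence of partial sums of the WUC series. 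You instead prove the upper estimate first, and directly for \emph{arbitrary} convex blocks of the partial sums $w_i=\sum_{j\leq i}y_j$: your identity $c_j=(1-\mu_j^{n_0})T_{n_0+1}+\mu_j^{n_0}T_{n_0}$, exhibiting each coefficient of $y_j$ as a convex combination of two consecutive tail sums, gives $\big\|\sum_n a_n W_n\big\|\leq M\big\|\sum_n a_n s_n\big\|_\infty$ with no norming functional, no basis constant, and no lower bound on $\|\mathfrak{X}_i\|$ (your error control only needs $\sum_n\|R_n\|<\infty$, not a constant tied to $c_1/(4\mathscr{K})$); the lower estimate is then recovered a posteriori by observing that $(\mathfrak{X}_n)$ is itself non-trivial weak Cauchy --- it inherits the weak$^*$ limit of $(x_n)$ in $X^{**}\setminus X$, which also forces seminormalization --- and applying Proposition~\ref{prop:Ros1} to it, at the cost of one further subsequence. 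That cost is harmless, as you correctly note: a subsequence of a convex block sequence is again one, and your upper estimate passes to subsequences because tail sums of the restricted coefficients are exactly the relevant tail sums. What your route buys is a cleaner, more quantitative upper bound (the constant is essentially the WUC constant $M$ plus the summable error) valid for every convex blocking of the partial sums; what the paper's route buys is a single convex block sequence carrying the two-sided estimate directly, with no weak$^*$-limit argument at the end. All the individual steps you rely on --- the inductive Mazur selection keeping $\|R_n\|<2^{-n}$, the convexity computation, and the inheritance arguments --- are sound.
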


\begin{proof} Let $(x_n)$ be a non-trivial weak Cauchy sequence in $X$. By Proposition \ref{prop:Ros1}, $(x_n)$ has a subsequence $(x_{n_k})$ which is wide-$(s)$. In particular, $(x_{n_k})$ is basic and dominates the summing basis of $\co$ (Definition \ref{defi:RosWids}). Thus there exists a constant $c_1>0$ so that
\begin{equation}\label{eqn:1propKO}
c_1\sup_{1\leq k\leq m}\Bigg| \sum_{i=k}^m a_i \Bigg| \leq \Bigg\| \sum_{i=1}^m a_i x_{n_i}\Bigg\|,
\end{equation}
for every sequence of scalars $(a_i)_{i=1}^m\subset \mathbb{R}$. Let $(y_n)$ be a sequence in $X$ satisfying properties (1) and (2) in Definition \ref{def:PelU}. Note that such sequence exists because $X$ has property $(u)$. For $n\in \mathbb{N}$, define 
\[
z_n= \sum_{i=1}^n y_i. 
\]
Then $(x_{n_k}  - z_{n_k})$ is weakly null, by (2)--Definition \ref{def:PelU}. By Mazur's Theorem (\cite[p. 344]{AK}) there is a convex block sequence of $(x_{n_k} - z_{n_k})$ which converges in norm to zero. We then can find a convex block sequence $(\mathfrak{X}_n)$ of $(x_{n_k})$ and a convex block sequence $(\mathfrak{Z}_n)$ of $(z_{n_k})$ such that 
\begin{equation}\label{eqn:2propKO}
\sum_{i=1}^\infty \big\| \mathfrak{X}_i - \mathfrak{Z}_i\big\| \leq \frac{c_1}{4\mathscr{K}},
\end{equation}
where $\mathscr{K}$ is the basis constant of $(x_{n_k})$. Using (\ref{eqn:1propKO}) one can easily verify that 
\begin{equation}\label{eqn:3propKO}
c_1\sup_{1\leq k\leq m}\Bigg| \sum_{i=k}^m a_i \Bigg| \leq \Bigg\| \sum_{i=1}^m a_i \mathfrak{X}_i\Bigg\|,
\end{equation}
for all sequence of scalars $(a_i)_{i=1}^m\subset \mathbb{R}$. It is clear that $\mathscr{K}$ is also the basis constant of $(\mathfrak{X}_i)$. Combining this fact with (\ref{eqn:2propKO}) one can show that
\begin{equation}\label{eqn:4propKO}
\Bigg\| \sum_{i=1}^m a_i \mathfrak{X}_i\Bigg\| \leq 2  \Bigg\| \sum_{i=1}^m a_i\mathfrak{Z}_i\Bigg\|,
\end{equation}
for all scalars $(a_i)_{i=1}^m$. Indeed, note that (\ref{eqn:3propKO}) implies $c_1\leq \| \mathfrak{X}_i\|$ for all $i\geq 1$ and, consequently, we have
\[
\begin{split}
\Bigg\| \sum_{i=1}^m a_i \mathfrak{X}_i\Bigg\| &\leq \max_{1\leq i\leq m}| a_i| \sum_{i=1}^m\big\| \mathfrak{X}_i - \mathfrak{Z}_i\big\| + \Bigg\| \sum_{i=1}^m a_i\mathfrak{Z}_i\Bigg\|\\
&\leq \frac{ c_1}{4\mathscr{K}}\max_{1\leq i\leq m}| a_i| +  \Bigg\| \sum_{i=1}^m a_i\mathfrak{Z}_i\Bigg\|\\
&\leq \frac{1}{2} \Bigg\| \sum_{i=1}^m a_i \mathfrak{X}_i\Bigg\| + \Bigg\| \sum_{i=1}^m a_i\mathfrak{Z}_i\Bigg\|.
\end{split}
\]
We now claim that $(\mathfrak{Z}_i)$ is dominated by the summing basis of $\co$. To see this, write $\mathfrak{Z}_i= \sum_{k\in F_i}\lambda^i_k z_{n_k}$ where $(F_i)_i$ is an increasing sequence of block of natural numbers and $(\lambda^i_k)_{k\in F_i}$ are non-negative numbers satisfying $\sum_{k\in F_i}\lambda^i_k=1$ for all $i\geq 1$. Next fix any sequence of scalars $(a_i)_{i=1}^m\subset \mathbb{R}$ and pick a functional $x^*\in B(X^*)$ so that $\| \sum_{i=1}^m a_i \mathfrak{Z}_i\|= x^*\big( \sum_{i=1}^m a_i \mathfrak{Z}_i\big)$. Now choose for each $1\leq i\leq m$ an index $\kappa_i\in F_i$ such that $a_i x^*(z_{n_{\kappa_i}})= \max\{ a_i x^*( z_{n_k})\colon k\in F_i\}$. Then
\[
\begin{split}
\Bigg\| \sum_{i=1}^m a_i \mathfrak{Z}_i\Bigg\|&=x^*\Bigg( \sum_{i=1}^m a_i \mathfrak{Z}_i\Bigg) =  \sum_{i=1}^m \sum_{k\in F_i} \lambda^i_k  a_i x^*( z_{n_k})\\
& \leq \sum_{i=1}^m a_i x^*( z_{n_{\kappa_i}})\leq \Bigg\|\sum_{i=1}^m a_i z_{n_{\kappa_i}}\Bigg\|.
\end{split}
\]
From (1)--Definition \ref{def:PelU} we know that $\sum_{n=1}^\infty y_n$ is (WUC). So, we can apply Theorem 6 in \cite[p. 44, implication 1. $\implies$ 2.]{D2} to conclude that there is $c_2>0$ such that 
\[
\Bigg\| \sum_{i=1}^m t_i y_i\Bigg\| \leq c_2 \sup_{1\leq i\leq m}| t_i|
\]
for all scalars $(t_i)_{i=1}^m\subset \mathbb{R}$. From this inequality and the definition of $(z_n)$ it is easy to deduce
\[
\begin{split}
 \Bigg\|\sum_{i=1}^m a_i z_{n_{k_i}}\Bigg\| &= \Big\| a_1 z_{n_{\kappa_1}} + \dots + a_m z_{n_{\kappa_m}}\Big\|\\
 &=\Bigg\| \Big(\sum_{i=1}^m a_i \Big)y_1 + \dots + \Big(\sum_{i=1}^m a_i\Big)y_{n_{\kappa_1}} + \Big( \sum_{i=2}^m a_i\Big) y_{n_{\kappa_1 +1}} + \dots \\
 &\hskip 2cm  \dots + \Big( \sum_{i=2}^m a_i\Big) y_{n_{\kappa_2}} + \dots + (a_{m-1} + a_m) y_{n_{\kappa_{m-1}}} + a_m y_{n_{\kappa_m}}\Bigg\|\\
 &\leq c_2 \sup_{1\leq k\leq m}\Bigg| \sum_{i=k}^m a_i\Bigg|.
 \end{split}
\]
We therefore obtain that 
\[
\Bigg\| \sum_{i=1}^m a_i \mathfrak{Z}_i\Bigg\|\leq  \Bigg\|\sum_{i=1}^m a_i z_{n_{k_i}}\Bigg\|\leq c_2 \sup_{1\leq k\leq m}\Bigg| \sum_{i=k}^m a_i\Bigg|.
\]
This combined with (\ref{eqn:3propKO}) and (\ref{eqn:4propKO}) yields
\[
c_1\sup_{1\leq k\leq m}\Bigg| \sum_{i=k}^m a_i \Bigg| \leq \Bigg\| \sum_{i=1}^m a_i \mathfrak{X}_i\Bigg\|\leq 2c_2 \sup_{1\leq k\leq m}\Bigg| \sum_{i=k}^m a_i\Bigg|,
\]
for all scalars $(a_i)_{i=1}^m \subset \mathbb{R}$ and hence $(\mathfrak{X}_i)$ is a convex block subsequence of $(x_n)$ which is equivalent to the summing basis of $\co$. The proof of proposition is complete. 
\end{proof}

We are now ready to prove the main result of this section. 

\begin{lem}\label{lem:KL2} Let $X$ be a Banach space with the property $(u)$ and $C\in\mathcal{B}(X)$. Then either $C$ is weakly compact, $C$ contains an $\ell_1$-sequence or $C$ contains a $\co$-summing basic sequence. 
\end{lem}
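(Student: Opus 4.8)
The plan is to establish the trichotomy in contrapositive form: assuming $C$ is not weakly compact, I would produce inside $C$ either an $\ell_1$-sequence or a $\co$-summing basic sequence. First I would invoke the Eberlein--\v{S}mulian theorem. Since $C$ is closed, convex and bounded, its failure to be weakly compact means it is not weakly sequentially compact, so there exists a sequence $(x_n)\subset C$ none of whose subsequences converges weakly. Being contained in the bounded set $C$, the sequence $(x_n)$ is bounded, so the Rosenthal $\ell_1$-theorem applies and splits the argument into two cases, according to whether $(x_n)$ admits an $\ell_1$-subsequence or a weak-Cauchy subsequence.

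The first case is immediate: an $\ell_1$-subsequence of $(x_n)$ is itself an $\ell_1$-sequence lying in $C$, so the second alternative of the statement holds. In the second case, let $(x_{n_k})$ be a weak-Cauchy subsequence. Before applying property $(u)$ I must verify that $(x_{n_k})$ is a \emph{non-trivial} weak-Cauchy sequence in the sense of Definition \ref{defi:RosWids}, that is, seminormalized, weak-Cauchy, and not weakly convergent. It is not weakly convergent by the very choice of $(x_n)$. It is bounded; and it is seminormalized because $\inf_k\|x_{n_k}\|>0$: otherwise a further subsequence would be norm-null, hence weakly null, contradicting the defining property of $(x_n)$. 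Thus $(x_{n_k})$ is a genuine non-trivial weak-Cauchy sequence.

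The key step is then to bring property $(u)$ into play through Proposition \ref{prop:KO}. Applied to the non-trivial weak-Cauchy sequence $(x_{n_k})$, it yields a convex block subsequence $(\mathfrak{X}_n)$ that is equivalent to the summing basis of $\co$. Finally I would observe that each $\mathfrak{X}_n$ is a finite convex combination of terms of $(x_{n_k})\subset C$, so the convexity of $C$ forces $(\mathfrak{X}_n)\subset C$. Hence $C$ contains a $\co$-summing basic sequence, and the three alternatives are exhausted.

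The point requiring care here is organizational rather than deep: one must ensure the weak-Cauchy subsequence genuinely qualifies as \emph{seminormalized and non-trivial} so that Proposition \ref{prop:KO} is applicable (the seminormalization being the one small verification that uses the no-weakly-convergent-subsequence hypothesis), and one must retain the convex block sequence inside $C$. This last observation is exactly where the convexity of $C$ is used, and it is the only place where the full strength of the hypothesis $C\in\mathcal{B}(X)$ — as opposed to $C$ being a merely bounded set — enters the argument.
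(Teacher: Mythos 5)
Your proof is correct and follows essentially the same route as the paper's: Eberlein--\v{S}mulian plus Rosenthal's $\ell_1$-theorem to produce either an $\ell_1$-subsequence or a non-trivial weak-Cauchy sequence in $C$, and then Proposition \ref{prop:KO} to extract a convex block subsequence equivalent to the summing basis of $\co$. The only differences are cosmetic: you make explicit the seminormalization check and the fact that convexity of $C$ keeps the convex blocks inside $C$, both of which the paper's proof leaves implicit.
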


\begin{proof} If $C$ is weakly compact, $C$ cannot contain neither a $\ell_1$-basic sequence nor a $\co$-summing basic sequence, due to the fact that these sequences fail to contain weak convergent subsequences. Assume now that $C$ is not weakly compact. Then it contains either an $\ell_1$-sequence or not. If so, the result follows. Otherwise, $C$ must contain a $\co$-summing basic sequence. Indeed, let $(x_n)\subset C$ be a weak-Cauchy sequence without weak convergent subsequences. This is possible thanks to Eberlein-\v{S}mulian's theorem and Rosenthal's $\ell_1$-theorem, as well. If $X$ has the property $(u)$, then so does the space $[(x_n)_n]$ (see \cite{Pel} (cf. also \cite[Proposition 3.5.2]{AK}).  By Proposition \ref{prop:KO} we can therefore deduce that $(x_n)$ has a convex block subsequence which is equivalent to the summing basis of $\co$. This concludes the proof. 
\end{proof}

\begin{rem} Notice that in the above result if $C$ is not weakly compact then it contains either an $\ell_1$-sequence or a $\co$-summing basic sequence. It is worth to mention that this is not an exclusive dichotomy. Indeed, consider $X=\ell_1\oplus \co$ which is a Banach space with an unconditional basis. Next denote by $\{ e_n\}$ and $\{ s_n\}$ the standard unit basis of $\ell_1$ and the summing basis of $\co$, respectively. Then any closed convex subset $C$ of $X$ containing both the sequences $\{ (e_n, 0)\}$ and $\{ (0, s_n)\}$ is an example of a non-weakly compact closed convex set containing both $\ell_1$ and $\co$-summing basic sequences. 
\end{rem}



\section{The $\mathcal{G}$-FPP in spaces with Pe\l czy\'nski's property $(u)$}\label{sec:6}
In this section we not only give an affirmative answer for Problem \ref{prob:1} in spaces with the property $(u)$, but we also provide an improvement to \cite[Theorem 3.2-(c)]{BPP}. 

Let $K$ be a nonempty convex subset of a Banach space $X$ and $f\colon K\to K$ an affine mapping. Following \cite[p. 9]{BPP} we define
\[
\theta(f)= \inf\big\{ \liminf_{n\to \infty}\| x - f^n(y)\| \colon x, y\in K\big\}.
\]
More precisely, we obtain the following result. 

\begin{thm}\label{thm:6.1} Let $X$ be a Banach space with the property $(u)$. Then $C\in \mathcal{B}(X)$ is weakly compact if and only if $C$ has the $\mathcal{G}$-FPP for the class of uniformly bi-Lipschitzian affine maps $f$ such that $\theta(f)=0$. 
\end{thm}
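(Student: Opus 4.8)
The plan is to prove the two implications separately, with essentially all the work in the reverse direction.

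\emph{The easy direction.} If $C$ is weakly compact, then any uniformly bi-Lipschitz affine self-map of a set $K\in\mathcal{B}(C)$ is in particular norm-continuous and affine, hence weakly continuous, so it has a fixed point by the Schauder--Tychonoff theorem; this is exactly part (i) of Theorem~\ref{thm:BPP}. Thus weak compactness yields the $\mathcal{G}$-FPP for \emph{all} continuous affine maps, a fortiori for the smaller class of uniformly bi-Lipschitz affine maps $f$ with $\theta(f)=0$. So only the converse needs an argument.

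\emph{Reduction and uniform bi-Lipschitzness.} Assume $C$ is not weakly compact. Since $X$ has property $(u)$, Lemma~\ref{lem:KL2} gives that $C$ contains either an $\ell_1$-sequence or a $\co$-summing basic sequence $(x_n)$; in the second case $(x_n)\sim_L(s_n)$ for the summing basis $(s_n)$ and some $L\geq 1$, and in both cases $(x_n)$ is a seminormalized wide-$(s)$ sequence. I would take $K=\overline{\conv}(\{x_n\})$, which as in Claim~1 of Theorem~\ref{thm:M1} equals $\{\sum t_n x_n:t_n\geq 0,\ \sum t_n=1\}$ because $(x_n)$ dominates $(s_n)$. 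For the map I would start from the affine extension $f$ of a shift $x_i\mapsto x_{i+1}$. The point making all iterates behave uniformly is that shifting indices leaves the summing basis invariant up to isometry, $\|\sum a_i s_{i+p}\|_\infty=\|\sum a_i s_i\|_\infty$ for every $p$ (and the same holds for the $\ell_1$-basis); combined with $(x_n)\sim_L(s_n)$ this forces $L^{-2}\|u-v\|\leq\|f^p(u)-f^p(v)\|\leq L^2\|u-v\|$ for all $u,v\in K$ and all $p\in\mathbb{N}$, so $f$ is uniformly bi-Lipschitz with a constant depending only on $L$. A controlled perturbation $z_n=(1-\alpha_n)x_n+\alpha_n x_{n+1}$ handled via the Principle of Small Perturbations (Theorem~\ref{thm:PSP}) preserves these two-sided bounds, so this part is routine once the equivalence to the summing basis is secured.

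\emph{The main obstacle: forcing $\theta(f)=0$.} The heart of the matter is to realize the above inside the \emph{smaller} class, i.e. to produce a fixed-point-free map whose $\theta$ vanishes. The useful reformulation is that $\theta(f)=0$ holds as soon as some orbit $(f^n(y))_n$ admits a norm-convergent subsequence (a norm cluster point), the infimum over $x$ then being attained at that cluster point. This is precisely where a plain or weighted shift fails: such maps push all the mass to infinity, so every orbit is norm-discrete and $\theta(f)>0$. One also cannot make an \emph{entire} orbit relatively norm-compact, since then $\overline{\conv}$ of the orbit would be a norm-compact convex $f$-invariant set and Markov--Kakutani would produce a genuine fixed point. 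Hence the construction must engineer one distinguished orbit that returns near a point of $K$ infinitely often while escaping in between, which is the step I expect to be genuinely hard.

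\emph{How I would attack the obstacle.} I would exploit the weak-Cauchy character of $(x_n)$ furnished by property $(u)$: passing to a convex block sequence as in Proposition~\ref{prop:KO}, I would build a weighted shift equipped with a thin "fold-back", defined only on the proper convex set $K$ (so that $f$ is \emph{not} a stochastic kernel on the full coefficient simplex, which is what otherwise rigidly ties $\theta(f)=0$ to the existence of an invariant convex combination and hence to a fixed point). The aim is that a single orbit acquires a norm cluster point while no invariant convex combination exists, so that $f$ is simultaneously affine, uniformly bi-Lipschitz, fixed-point-free, and satisfies $\theta(f)=0$. Verifying all four properties at once for one explicit $f$ is the crux of the proof, and is exactly what upgrades part (c) of the Benavides--Japón-Pineda--Prus theorem to arbitrary spaces with property $(u)$.
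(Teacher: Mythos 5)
Your easy direction and your set-up for the converse coincide with the paper's: Lemma~\ref{lem:KL2} yields in $C$ either an $\ell_1$-sequence or a $\co$-summing basic sequence, hence a wide-$(s)$ sequence $(x_n)$ with $(x_{n+p})$ equivalent to $(x_n)$ uniformly in $p$; the paper then takes $K=\overline{\conv}(\{x_n\})$ and the plain right shift $f\big(\sum_n t_nx_n\big)=\sum_n t_nx_{n+1}$, which is affine, fixed-point free and uniformly bi-Lipschitz exactly by the shift-invariance argument you give (the small-perturbation step with $z_n=(1-\alpha_n)x_n+\alpha_nx_{n+1}$ belongs to Theorem~\ref{thm:M1} and is not needed here). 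The divergence --- and the gap --- is in what you call the main obstacle. The paper does not try to achieve $\theta(f)=0$; the entire remaining work of its proof is to show that the shift satisfies $\theta(f)>0$: passing to a subsequence one may assume (\cite[Fact 2.1]{BPP}) that some $\varphi\in X^*$ has $\gamma=\inf_n\varphi(x_n)>0$, so every $x\in K$ satisfies $\|x\|\geq\beta:=\gamma/\|\varphi\|$; then, fixing $\varepsilon<\beta/(1+2\mathscr{K})$, choosing $m$ with $\|R_mx\|<\varepsilon$, and noting that $P_mf^n(y)$ vanishes for large $n$ (a right shift pushes all coefficients past $m$), the estimate \eqref{eqn:Theta} gives $\|x-f^n(y)\|\geq\|P_mx\|/\mathscr{K}-2\varepsilon\geq\big(\beta-\varepsilon(1+2\mathscr{K})\big)/\mathscr{K}>0$. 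The clause ``$\theta(f)=0$'' in the statement must be read as in \cite[Theorem 3.2]{BPP}, in the Corollary immediately following the theorem, and in item (b) of the summary theorem of Section~8: weak compactness of $C$ is equivalent to the assertion that every uniformly bi-Lipschitz affine self-map of every $K\in\mathcal{B}(C)$ has a fixed point, hence $\theta(f)=0$; and in the non-weakly compact case this fails in the strong form that some self-map has $\theta(f)>0$. Your reading sends you after the opposite contrapositive, so you declare inadequate (``such maps push all the mass to infinity'') precisely the map whose $\theta(f)>0$ is the paper's desired conclusion.

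Even granting your literal reading of the class $\{f:\theta(f)=0\}$, the proposal contains no proof: the ``fold-back'' map is never written down, and none of the four required properties (affine, uniformly bi-Lipschitz, fixed-point free, $\theta(f)=0$) is verified --- you flag this step yourself as the crux. There is moreover a concrete obstruction your sketch ignores, sharper than your Markov--Kakutani remark. If $f$ is affine and uniformly bi-Lipschitz with upper constant $c_2$ and one orbit clusters in norm, say $f^{n_k}(y)\to z$, then $\|f^{\,n_{k+1}-n_k}(z)-f^{\,n_{k+1}}(y)\|\leq c_2\|z-f^{\,n_k}(y)\|\to0$, whence $f^{\,m_k}(z)\to z$ for $m_k=n_{k+1}-n_k$. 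If some value $m$ recurs among the $m_k$, then $f^m(z)=z$, and since $f$ is affine the average $\frac{1}{m}\sum_{j=0}^{m-1}f^j(z)$ is a genuine fixed point; so your construction would be forced to produce recurrence with return times tending to infinity while keeping the lower Lipschitz bound uniform over all iterates, a tension the sketch does not address. In sum: measured against the paper, your argument is missing exactly at its center, and the paper closes that center in a few lines by proving $\theta(f)>0$ for the shift rather than $\theta(f)=0$ for a new map.
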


\begin{proof} It suffices to prove the converse implication. Assume that $C$ is not weakly compact. By Lemma \ref{lem:KL2} either $C$ contains an $\ell_1$-basic sequence or it contains a $\co$-summing basic sequence. In either case, $C$ contains a wide-$(s)$ sequence $(x_n)$ so that $(x_{n+p})$ is equivalent to $(x_n)$, but uniformly on $p\in \mathbb{N}$. Furthermore, passing to a subsequence if necessary, we may assume (cf. \cite[Fact 2.1]{BPP}) that there is a functional $\varphi\in X^*$ so that 
\[
\gamma=\inf\big\{ \varphi(x_n)\colon n\in \mathbb{N}\big\}>0.
\]
Hence for $K=\overline{\conv}\big( \{ x_n\}\big)$ the map $f\colon K\to K$  given by 
\[
f( x) = \sum_{n=1}^\infty t_n x_{n+1}\quad \textrm{for } \,x= \sum_{n=1}^\infty t_n x_n\in K,
\]
is affine, fixed point free and uniformly bi-Lipschitz. Therefore it remains only to show that $\theta(f)>0$. Notice that 
\[
K=\Bigg\{\sum_{n=1}^\infty t_n x_n\colon \text{each }\, t_n\geq 0\text{ and }\sum_{n=1}^\infty t_n=1\Bigg\}.
\]
Let $x= \sum_{n=1}^\infty t_n x_n$ and $y= \sum_{n=1}^\infty s_n x_n$ belong to $K$. Put $\beta= \gamma/\| \varphi\|$ and fix $0< \varepsilon< \beta/(1 + 2\mathscr{K})$ where $\mathscr{K}$ denotes the basis constant of $(x_n)$. Next find $m$ large enough so that $\|R_mx\|< \varepsilon$. Since $f$ is a right-shift it is easy to see that $\| P_m f^n(y)\|< \varepsilon$ for $n$ sufficiently large. By (\ref{eqn:Theta}) we have
\[
\begin{split}
\| x - f^n(y)\| &\geq \| P_m x - R_m f^n(y)\| - \| R_m x\| - \| P_m f^n y\| \geq \frac{ \| P_m x\|}{\mathscr{K}} -2\varepsilon\\
&\geq \frac{ \| x \| - \varepsilon}{\mathscr{K}} - 2\varepsilon \geq \frac{1}{\mathscr{K}}\big( \frac{\gamma}{\| \varphi\|} - \varepsilon) - 2\varepsilon = \frac{ \beta - \varepsilon( 1 + 2\mathscr{K})}{\mathscr{K}}.
\end{split}
\]
This shows that $\theta(f)>0$ and concludes the proof of the theorem. 
\end{proof}

We know that in $\co$ the standard unit basis $(e_i)_{i=1}^\infty$ is unconditional. In particular $\co$ has property $(u)$. Thus an immediate consequence of Theorem \ref{thm:6.1} is the following 

\begin{cor}[Theorem 3.2-(c), \cite{BPP}] Let $C\subset \co$ be a closed convex bounded set. If $C$ is not weakly compact, then there are a closed convex subset $K\subset C$ and an affine uniformly Lipschitzian mapping $f\colon K\to K$ such that $\theta(f)>0$. 
\end{cor}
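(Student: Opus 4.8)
The plan is to obtain the statement as a direct specialization of Theorem \ref{thm:6.1}, the only preliminary being to check that $\co$ falls under its hypothesis. First I would recall that the standard unit vector basis $(e_i)$ of $\co$ is unconditional; by the fact cited in the remark following Definition \ref{def:PelU} (namely \cite[Proposition 3.5.3]{AK}), every Banach space with an unconditional basis has Pe\l czy\'nski's property $(u)$, and hence so does $\co$. This is essentially the whole verification, and it is the reason the corollary is ``immediate''.

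With property $(u)$ in hand, I would assume $C\subset\co$ is not weakly compact and reproduce the construction carried out in the proof of Theorem \ref{thm:6.1}. By Lemma \ref{lem:KL2} the set $C$ contains either an $\ell_1$-basic sequence or a $\co$-summing basic sequence; in either case $C$ contains a wide-$(s)$ sequence $(x_n)$ whose right shifts $(x_{n+p})$ are equivalent to $(x_n)$ uniformly in $p\in\mathbb{N}$. After passing to a subsequence I may also assume, as in \cite[Fact 2.1]{BPP}, that there is $\varphi\in X^*$ with $\gamma=\inf_n\varphi(x_n)>0$. I then set $K=\overline{\conv}(\{x_n\})$ and take $f\colon K\to K$ to be the right-shift map $f(\sum_n t_n x_n)=\sum_n t_n x_{n+1}$.

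It remains to read off the required properties of this pair $(K,f)$. The set $K$ is a closed convex subset of $C$ and $f$ is affine with $f(K)\subset K$. The uniform equivalence of $(x_{n+p})$ with $(x_n)$ makes $f$ uniformly bi-Lipschitz, in particular uniformly Lipschitzian, which is all the corollary demands. Finally, the estimate in the proof of Theorem \ref{thm:6.1}, built from \eqref{eqn:Theta} together with the lower bound $\gamma>0$, yields $\theta(f)>0$; note that $\theta(f)>0$ already forces $f$ to be fixed-point free, since a fixed point $x_0$ would give $\liminf_n\|x_0-f^n(x_0)\|=0$ and hence $\theta(f)=0$.

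I do not expect any genuine obstacle: the entire analytic content---the existence of the uniformly shift-dominating wide-$(s)$ sequence and the lower estimate for $\theta(f)$---is already supplied by Lemma \ref{lem:KL2} and the proof of Theorem \ref{thm:6.1}. The one point that must be stated explicitly, rather than merely inherited, is that $\co$ enjoys property $(u)$; everything else is transcription. Since the corollary asks for the explicit witnesses $K$ and $f$ rather than only for the failure of the $\mathcal{G}$-FPP, it is cleanest to exhibit them directly as above.
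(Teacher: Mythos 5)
Your proposal is correct and follows exactly the paper's route: the paper also deduces the corollary from Theorem \ref{thm:6.1} by observing that the unit vector basis of $\co$ is unconditional, hence $\co$ has Pe\l czy\'nski's property $(u)$. Your one refinement---noting that the explicit witnesses $K$ and $f$ with $\theta(f)>0$ must be read off from the \emph{proof} of Theorem \ref{thm:6.1} (via Lemma \ref{lem:KL2}, the shift map, and the estimate built on \eqref{eqn:Theta}) rather than from its bare statement---is accurate and fills in what the paper leaves implicit in calling the corollary ``immediate''.
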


Another immediate corollary of Theorem \ref{thm:6.1} is

\begin{cor} Let $X$ be a Banach space. Assume that $X$ is either $L$-embedded or has the hereditary DP-property. Then $C\in \mathcal{B}(X)$ is weakly compact if and only if it has the $\mathcal{G}$-FPP for the class of uniformly bi-Lipschitz affine mappings. 
\end{cor}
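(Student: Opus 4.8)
The plan is to read the statement off Theorem \ref{thm:6.1} by squeezing the class of maps appearing in the corollary between two classes that Theorem \ref{thm:6.1} already controls, and to supply the hypothesis of that theorem---property $(u)$---separately in the two cases. The forward implication needs no new idea: if $C$ is weakly compact and $f\colon K\to K$ is a uniformly bi-Lipschitz affine self-map of some $K\in\mathcal{B}(C)$, then $K$ is a weakly compact convex set and $f$, being Lipschitz, is norm-continuous and hence weakly continuous on $K$, so Schauder--Tychonoff's theorem produces a fixed point. Thus weak compactness of $C$ already yields the $\mathcal{G}$-FPP for the whole class of uniformly bi-Lipschitz affine maps.

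For the converse I would run a sandwich argument. Let $\mathcal{U}$ denote the class of uniformly bi-Lipschitz affine maps and $\mathcal{U}_0=\{f\in\mathcal{U}\colon\theta(f)=0\}$, so that $\mathcal{U}_0\subseteq\mathcal{U}$. Since the $\mathcal{G}$-FPP for a larger class is a stronger requirement, the $\mathcal{G}$-FPP for $\mathcal{U}$ automatically entails the $\mathcal{G}$-FPP for $\mathcal{U}_0$. Hence, as soon as $X$ is known to have property $(u)$, Theorem \ref{thm:6.1} applies and forces $C$ to be weakly compact. Together with the forward implication this closes the chain: $C$ weakly compact $\Rightarrow$ $\mathcal{G}$-FPP for $\mathcal{U}$ $\Rightarrow$ $\mathcal{G}$-FPP for $\mathcal{U}_0$ $\Rightarrow$ $C$ weakly compact. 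The corollary therefore reduces to verifying property $(u)$ under each of the two hypotheses, and no further fixed-point theory is involved.

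When $X$ is $L$-embedded, property $(u)$ is exactly the result recorded in the Remark after Definition \ref{def:PelU} (see \cite{GL}), so there is nothing to prove. When $X$ has the hereditary Dunford--Pettis property I would invoke the characterization that in such a space every seminormalized weakly null sequence has a subsequence equivalent to the unit vector basis of $\co$, and then check property $(u)$ straight from Definition \ref{def:PelU}. Given a weak Cauchy sequence $(x_n)$, the weakly convergent case is immediate (take $y_1$ to be the weak limit and $y_n=0$ for $n\ge2$). In the non-trivial case I would first pass, by Proposition \ref{prop:Ros1}, to a wide-$(s)$ subsequence; its consecutive differences are weakly null, and since a wide-$(s)$ sequence dominates the summing basis of $\co$ (Definition \ref{defi:RosWids}) they are bounded below in norm, hence seminormalized. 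The hereditary Dunford--Pettis property then supplies a $\co$-equivalent, and so weakly unconditionally Cauchy, difference sequence, and telescoping such a sequence is designed to produce exactly the partial sums demanded by condition (2) of Definition \ref{def:PelU}, the $\co$-upper estimate giving condition (1).

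The main obstacle is the compatibility between the two extractions hidden in this last step: property $(u)$ must be verified on an honest weak Cauchy sequence, yet the Dunford--Pettis input yields a $\co$-estimate only after thinning the \emph{difference} sequence, whereas an exact telescoping needs the estimate for the consecutive differences of a \emph{subsequence of $(x_n)$ itself}, and these two thinnings do not obviously coincide. I expect this to be the one genuinely technical point. I would handle it either by a stabilization argument arranging that the retained consecutive differences satisfy a \emph{uniform} $\co$-upper estimate---which then passes to the block sums created by any further thinning---or, more safely, by citing the known implication that the hereditary Dunford--Pettis property implies property $(u)$. Once property $(u)$ is available, Theorem \ref{thm:6.1} closes the argument exactly as in the sandwich above.
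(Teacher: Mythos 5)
Your proposal is correct and matches the paper's (implicit) argument exactly: the corollary is stated there as an immediate consequence of Theorem \ref{thm:6.1}, using precisely your class-inclusion observation together with the facts, recorded in the surrounding remarks, that $L$-embedded spaces have property $(u)$ by \cite{GL} and that hereditarily DP spaces have property $(u)$ by the proof of \cite[Theorem 2.1]{KO}. Your optional direct verification of property $(u)$ from the hereditary DP-property does have the telescoping/extraction gap you identify (a $\co$-subsequence of the differences need not consist of consecutive differences of a subsequence of $(x_n)$, and fixing this is exactly the uniformity content of \cite{KO}), but since you fall back on citing the known implication---which is what the paper does---the proof stands.
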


\begin{rem} Recall \cite{D} a Banach space $X$ is said to have the DP-property (i.e., Dunford-Pettis property) if for every pair of weakly null sequences $(x_n)\subset X$ and $(x^*_n)\subset X^*$ one has $\lim_{n\to \infty} \langle x_n , x^*_n\rangle=0$. Further, $X$ is said to have the hereditarily DP-property if all of its closed subspaces have the DP-property. It is also known (cf. proof of \cite[Theorem 2.1]{KO}) that spaces with hereditary DP-property have property $(u)$. 
\end{rem}

\begin{rem}\label{rem:23} The classical James' space $\mathrm{J}_2$ is a standard example of a space which fails property $(u)$. If $(e_i)$ denotes the canonical basis of $\mathrm{J}_2$, then recall the summing basis of  $\mathrm{J}_2$ is the sequence $(u_n)$ given by $u_n=\sum_{i=1}^n e_i$ ($n\in \mathbb{N}$). It is well known that 
\[
\Bigg\| \sum_{i=1}^\infty a_n u_n \Bigg\|_{\mathrm{J}_2}=\sup\Bigg\{ \Big( \sum_{k=1}^n\Big| \sum_{i=p_{k-1}}^{p_k -1} a_i \Big|^2\Big)^{1/2}\, \colon n\in \mathbb{N},\, p_0< \dots < p_n\Bigg\}.
\]
Furthermore, $(u_n)$ is a boundedly complete conditional spreading basis for $\mathrm{J}_2$ (e.g., see \cite[p. 1207]{AMS}). Recall a sequence in a Banach space is called {\it spreading} if it is equivalent to all of its subsequences. Similarly one can define the James's space $\rm{J}_p$ ($1<p<\infty)$ as being the jamesification of $\ell_p$ (see \cite[p. 5]{BPP}). Theorem \ref{thm:6.1} gives us a nice description of weak compactness in Banach spaces with property $(u)$. It would be thus interesting to know whether a similar result holds for $\mathrm{J}_p$. A positive answer would improve \cite[Theorem 3.2-(b)]{BPP}. In the next section we will discuss this issue in more detail. 
\end{rem}



\section{Banach spaces with property $(\mathfrak{su})$ and the $\mathcal{G}$-FPP}\label{sec:7}
In this section we shall introduce a generalization of the Pe\l czy\'nski's property $(u)$ and use it to give an affirmative answer for the question raised in Remark \ref{rem:23}. As a motivation we recall a notion of convex block homogeneous sequences introduced and studied recently by S. Argyros, P. Motakis and B. Sari \cite[Definition 3.1]{AMS}. 

\begin{dfn} Let $X$ be a Banach space and $(x_n)$ be a basic sequence in $X$.
\begin{itemize}
\item[(i)] If $(x_n)$ is equivalent to all of its convex block sequences, then $(x_n)$ is called convex block homogeneous. 
\item[(ii)] If $(x_n)$ is isometrically equivalent to all of its convex block sequences, then $(x_n)$ is called $1$-convex block homogeneous. 
\end{itemize}
\end{dfn}
These properties were heavily used in \cite{AMS} to decompose conditional spreading bases into two well behaved parts, one of which being unconditional and the other convex block homogeneous \cite[Theorem 4.1 and Remark 4.2]{AMS}. They were also used to study several structural properties of such spaces. There, it is seen that the unit vector basis of $\ell_1$, the summing basis of $\co$ and the boundedly complete basis of James's space $\mathrm{J}_2$ as well, are among the simplest examples of convex block homogeneous bases. 

The notion we are concerned with is a theoretical notion, which provides a kind of {\it shiftsification} of the notions of convex block homogeneous sequences and Pe\l czy\'nski's property $(u)$. 

\begin{dfn}[Convex Block Shiftable Sequences]\label{def:cbss} A basic sequence $(x_n)$ in a Banach space $X$ is said to be {\it convex block shiftable} if there exist an $L>0$ and a subsequence $(y_n)$ of $(x_n)$ such that every convex block sequence $(w_n)$ of $(y_n)$ is {\it uniformly $L$-shift equivalent}, i.e.  $(w_{n+p}) \sim_L (w_n)$ for every $p\in \mathbb{N}$. 
\end{dfn}

\begin{dfn}[Property $(\mathfrak{su})$]\label{dfn:2sec7} A Banach space $X$ is said to have {\it property $(\mathfrak{su})$}, if for every non-trivial weak Cauchy sequence $(x_n)$ in $X$ there exists a convex block shiftable sequence $(w_n)$ such that $(x_n - w_n)$ is weakly null. 
\end{dfn}

Now we show that property $(\mathfrak{su})$ constitutes a stronger reformulation of property $(u)$. 

\begin{prop}\label{prop:1sec7} Pe\l czy\'nski's property $(u)$ implies property $(\mathfrak{su})$. 
\end{prop}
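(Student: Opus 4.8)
The plan is to run the given weak Cauchy sequence through the definition of property $(u)$ and to recognize the resulting sequence of partial sums as convex block shiftable. So let $(x_n)$ be a non-trivial weak Cauchy sequence in $X$. Invoking property $(u)$, fix $(y_n)\subset X$ with $\sum_n y_n$ weakly unconditionally Cauchy and such that, putting $w_n:=\sum_{i=1}^n y_i$, the sequence $(x_n-w_n)$ is weakly null. This choice already supplies the weak-null requirement of Definition \ref{dfn:2sec7}, so the entire matter reduces to proving that $(w_n)$ is convex block shiftable in the sense of Definition \ref{def:cbss}.

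First I would observe that $(w_n)$ is itself non-trivial weak Cauchy: it is weak Cauchy since $w_n=x_n-(x_n-w_n)$ is the difference of a weak Cauchy and a weakly null sequence, and it is non-weakly-convergent because $(x_n)$ is. Hence Proposition \ref{prop:Ros1} yields an $(s)$-subsequence $(w_{n_k})$, which in particular is wide-$(s)$; this will play the role of the subsequence required in Definition \ref{def:cbss}. Being wide-$(s)$ it is basic and dominates the summing basis, so there is $c_1>0$ with $c_1\sup_{1\le k\le m}\big|\sum_{i=k}^m a_i\big|\le \big\|\sum_{i=1}^m a_i w_{n_i}\big\|$ for all scalars $(a_i)$.

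The core of the argument is a uniform two-sided comparison of \emph{every} convex block sequence of $(w_{n_k})$ with the summing basis. Let $(v_j)$ be such a block, $v_j=\sum_{k\in F_j}\lambda^j_k w_{n_k}$. For the upper estimate I would repeat the computation in the proof of Proposition \ref{prop:KO} essentially verbatim: norming $\sum_j a_j v_j$ by a functional, selecting the extremal index in each block, telescoping the $w$'s back into the $y$'s, and applying the (WUC) bound $\|\sum_i t_i y_i\|\le c_2\sup_i|t_i|$ from Theorem~6 of \cite{D2}, one obtains $\big\|\sum_j a_j v_j\big\|\le c_2\sup_{1\le j\le m}\big|\sum_{i=j}^m a_i\big|$ with one and the same $c_2$ for every block. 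For the lower estimate I would note that wide-$(s)$ domination survives convex blocks with the same constant: writing $\sum_j a_j v_j=\sum_k b_k w_{n_k}$ with $b_k=a_j\lambda^j_k$ for $k\in F_j$, the choice $k_0=\min F_{j_0}$ gives $\sum_{k\ge k_0}b_k=\sum_{j\ge j_0}a_j$, whence $\sup_{k_0}\big|\sum_{k\ge k_0}b_k\big|\ge \sup_{1\le j\le m}\big|\sum_{i=j}^m a_i\big|$ and therefore $c_1\sup_{1\le j\le m}\big|\sum_{i=j}^m a_i\big|\le\big\|\sum_j a_j v_j\big\|$. Thus every convex block of $(w_{n_k})$ is equivalent to the summing basis with constants governed only by $c_1$ and $c_2$.

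Uniform shift equivalence is then almost immediate, and this block-independence of the constants is the whole payoff. For every $p$ the shifted sequence $(v_{j+p})_j$ is again a convex block of $(w_{n_k})$ — it merely discards the first $p$ blocks — so it obeys exactly the same bounds $c_1\,\sigma(a)\le\|\cdot\|\le c_2\,\sigma(a)$, where $\sigma(a)=\sup_{1\le j\le m}\big|\sum_{i=j}^m a_i\big|$ depends only on the scalars. Comparing the two norms through $\sigma(a)$ yields $(v_{j+p})\sim_L(v_j)$ with $L=c_2/c_1$ independent of $p$ and of the block, which is exactly uniform $L$-shift equivalence; hence $(w_n)$ is convex block shiftable and $X$ has property $(\mathfrak{su})$. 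The one point that genuinely needs care — and the step I expect to be the main obstacle — is matching Definition \ref{def:cbss} on the nose, since a convex block shiftable sequence is required to be basic while the full partial-sum sequence $(w_n)$ need not be. I would resolve this by emphasizing that all the estimates above are proved purely for convex blocks of the wide-$(s)$, hence basic, subsequence $(w_{n_k})$, so that no property of the discarded terms is ever used; $(w_{n_k})$ itself serves as the witnessing basic subsequence of Definition \ref{def:cbss}, and the weak-null condition holds for $(w_n)$ by construction.
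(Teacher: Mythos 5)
Your proof is correct in substance but takes a genuinely different route from the paper's. The paper disposes of the proposition in three lines by citing Proposition \ref{prop:KO}: it takes the convex block sequence $(\mathfrak{X}_n)$ of $(x_n)$ equivalent to the summing basis produced there, observes that every convex block sequence of $(\mathfrak{X}_n)$ is again equivalent to the summing basis (the summing basis being convex block homogeneous), so that $(\mathfrak{X}_n)$ is convex block shiftable, and notes that $(x_n-\mathfrak{X}_n)$ is weakly null because the blocks move to the right and $(x_n)$ is weak Cauchy. You instead work directly with the partial sums $w_n=\sum_{i=1}^n y_i$ of the WUC series and in effect re-prove the two estimates that form the interior of the paper's proof of Proposition \ref{prop:KO}: the wide-$(s)$ lower bound $c_1\,\sigma(a)\le\big\|\sum_j a_jv_j\big\|$, where $\sigma(a)=\sup_{1\le j\le m}\big|\sum_{i=j}^m a_i\big|$, which you correctly show passes to convex blocks with the \emph{same} constant (your computation $\sum_{k\ge\min F_{j_0}}b_k=\sum_{j\ge j_0}a_j$), and the telescoping/WUC upper bound $\big\|\sum_j a_jv_j\big\|\le c_2\,\sigma(a)$, whose constant is likewise block-independent. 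Since shifting a convex block sequence yields another convex block sequence, uniform $L$-shift equivalence with $L=c_2/c_1$, uniform in $p$ and in the block, falls out. What your route buys: it bypasses the Mazur/small-perturbation step of Proposition \ref{prop:KO} entirely, because you never need the witness to be a convex block of $(x_n)$ --- for the partial sums, weak nullity of $(x_n-w_n)$ is handed to you by property $(u)$ itself --- and it makes the uniformity of the shift constant explicit, which the paper leaves to the reader as ``not difficult to verify.''

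The one loose end is the definitional matching you flagged, and your stated resolution is not quite right as written. Definition \ref{def:cbss} applies only to \emph{basic} sequences, so the full partial-sum sequence $(w_n)$, which need not be basic, cannot literally serve as the convex block shiftable sequence demanded by Definition \ref{dfn:2sec7}; saying that ``$(w_{n_k})$ serves as the witnessing basic subsequence while the weak-null condition holds for $(w_n)$'' mixes the two sequences. The clean repair is to take the witness to be $(w_{n_k})$ itself --- it is basic, being wide-$(s)$, and the subsequence required in Definition \ref{def:cbss} may be taken to be the sequence itself, since your estimates hold for \emph{all} of its convex blocks --- and then to check that $(x_k-w_{n_k})_k$ is weakly null. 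That is the one line you omitted: $x_k-w_{n_k}=(x_k-x_{n_k})+(x_{n_k}-w_{n_k})$, where the first summand is weakly null because $(x_n)$ is weak Cauchy and the second is a subsequence of the weakly null sequence $(x_n-w_n)$. With that sentence added, your argument is complete.
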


\begin{proof} Let $X$ be a Banach space with property $(u)$ and $(x_n)$ a non-trivial weak Cauchy sequence in $X$. By Proposition \ref{prop:KO} there exists a convex block sequence $(\mathfrak{X}_n)$ of $(x_n)$ which is equivalent to the summing basis of $\co$. It is not difficult to verify that all convex block sequences of $(\mathfrak{X}_n)$ are also equivalent to the summing basis of $\co$. In particular $(\mathfrak{X}_n)$ is convex block shiftable. Finally, as it is easy to verify that $(x_n - \mathfrak{X}_n)$ is weakly null, the required property follows from Definition \ref{dfn:2sec7}. 
\end{proof}

\begin{prop}\label{prop:2sec7} If a Banach space $X$ has property $(\mathfrak{su})$ then every closed subspace $Y$ of $X$ also has property $(\mathfrak{su})$. 
\end{prop}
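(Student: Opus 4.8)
The plan is to show that the sequence witnessing property $(\mathfrak{su})$ for $Y$ can be manufactured \emph{inside} $Y$ out of the one supplied by $X$: one replaces it by a nearby convex block sequence of the original data and transports the entire shift structure through a single small-perturbation isomorphism.

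First I would fix a non-trivial weak Cauchy sequence $(x_n)$ in $Y$. Since $Y$ is norm-closed (hence weakly closed) and its weak topology is induced from that of $X$, the sequence $(x_n)$ is also non-trivial weak Cauchy \emph{in $X$}, and any sequence built from the $x_n$ that is weakly null in $X$ is weakly null in $Y$. By Proposition \ref{prop:Ros1} I may pass to an $(s)$-subsequence and so assume from the outset that $(x_n)$ is wide-$(s)$; in particular it dominates the summing basis, so there is a $d>0$ with $\|\sum_{i\in F}\lambda_i x_i\|\ge d$ for \emph{every} convex block (take nonnegative coefficients summing to $1$ and note that the largest prefix sum equals $1$). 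Applying property $(\mathfrak{su})$ of $X$ to $(x_n)$ produces a convex block shiftable sequence $(w_n)\subset X$ with $(x_n-w_n)$ weakly null; after passing to the subsequence furnished by Definition \ref{def:cbss} (and the matching subsequence of $(x_n)$, which stays wide-$(s)$ with the same $d$) I may assume that every convex block of $(w_n)$ is uniformly $L$-shift equivalent. Let $\mathscr{K}_0$ be the basis constant of $(w_n)$.

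Next I would apply Mazur's Theorem to the weakly null sequence $(x_n-w_n)$ to select consecutive finite blocks $F_n$ and convex coefficients $(\lambda^n_i)_{i\in F_n}$ with $\sum_n\|\sum_{i\in F_n}\lambda^n_i(x_i-w_i)\|<\eta$, where $\eta<d/(4\mathscr{K}_0)$. Put $\mathfrak{X}_n=\sum_{i\in F_n}\lambda^n_i x_i\in Y$ and $\mathfrak{W}_n=\sum_{i\in F_n}\lambda^n_i w_i\in X$, so that $\sum_n\|\mathfrak{X}_n-\mathfrak{W}_n\|<\eta$. The lower bound above gives $\|\mathfrak{X}_n\|\ge d$, whence $\|\mathfrak{W}_n\|\ge d-\eta\ge d/2$, while $(\mathfrak{W}_n)$, being a convex block of $(w_n)$, has basis constant at most $\mathscr{K}_0$. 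Consequently $2\mathscr{K}_0\sum_n\|\mathfrak{W}_n-\mathfrak{X}_n\|/\|\mathfrak{W}_n\|\le(4\mathscr{K}_0/d)\eta<1$, so Theorem \ref{thm:PSP} yields an invertible operator $A\colon X\to X$ with $A\mathfrak{W}_n=\mathfrak{X}_n$ and $\|A\|\,\|A^{-1}\|\le\Lambda$ for some finite $\Lambda$ depending only on that sum.

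The key point is then that $A$ conjugates \emph{all} of the convex block structure simultaneously. Indeed, any convex block $\mathfrak{Y}_n=\sum_{k\in G_n}\mu^n_k\mathfrak{X}_k$ of $(\mathfrak{X}_n)$ satisfies $\mathfrak{Y}_n=A\mathfrak{V}_n$, where $\mathfrak{V}_n=\sum_{k\in G_n}\mu^n_k\mathfrak{W}_k$ is a convex block of $(\mathfrak{W}_n)$, hence of $(w_n)$; therefore $(\mathfrak{V}_{n+p})\sim_L(\mathfrak{V}_n)$ for every $p$. Applying the isomorphism $A$ to this equivalence gives $(\mathfrak{Y}_{n+p})\sim_{\Lambda L}(\mathfrak{Y}_n)$ for every $p$, with the constant $\Lambda L$ independent of both $p$ and of the chosen block $(\mathfrak{Y}_n)$; thus $(\mathfrak{X}_n)$ is convex block shiftable. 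Finally, since $(\mathfrak{X}_n)$ is a convex block of the weak Cauchy sequence $(x_n)$ it shares the same limit functional, so $(x_n-\mathfrak{X}_n)$ is weakly null, and as $(\mathfrak{X}_n)\subset Y$ this exhibits the sequence required by Definition \ref{dfn:2sec7} for $Y$. I expect the main obstacle to be exactly this last transfer: perturbing each convex block separately would not keep the shift constant uniform across all blocks, and it is the use of the \emph{single} global isomorphism $A$ — together with the wide-$(s)$ lower bound that makes the perturbation uniformly small — that secures the required uniformity in $p$.
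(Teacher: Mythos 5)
Your proof is correct and follows essentially the same route as the paper's (deliberately abbreviated) proof: apply property $(\mathfrak{su})$ of $X$ together with Mazur's lemma to manufacture, inside $Y$, a convex block sequence $(\mathfrak{X}_n)$ of the given sequence lying norm-close to a convex block of the shiftable sequence, and then transfer convex block shiftability by a small-perturbation argument. Your use of the single global isomorphism $A$ from Theorem \ref{thm:PSP} to keep the shift constant $\Lambda L$ uniform over \emph{all} convex blocks simultaneously is exactly the detail the paper leaves implicit, and you carry it out correctly.
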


\begin{proof} The proof uses similar arguments as those in \cite[Proposition 3.5.2]{AK}, and we will only abbreviate it. Let $(y_n)$ be a non-trivial weak Cauchy sequence in $Y$. Then, up to a subsequence, by applying Mazur's lemma and using the assumption that $X$ has property $(\mathfrak{su})$, we can obtain a convex block sequence $(\mathfrak{Y}_k)$ of $(y_n)$ that is convex block shiftable and is such that $(y_k - \mathfrak{Y}_k)$ converges weakly to $0$. 
\end{proof}

\begin{thm}\label{thm:1sec7} Every Banach space with a spreading basis has property $(\mathfrak{su})$. 
\end{thm}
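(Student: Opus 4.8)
The plan is to reduce Theorem \ref{thm:1sec7} to a single statement about convex block sequences and then feed in the structure theory of conditional spreading sequences. Two reductions, valid in any Banach space, do most of the work. First, if $(x_n)$ is weak Cauchy with weak$^*$ limit $x^{**}\in X^{**}$ and $(w_n)$ is any convex block sequence of $(x_n)$, then each $w_n$ is a convex average of terms whose images under every $x^*\in X^*$ converge to $x^{**}(x^*)$, so $w_n\to x^{**}$ weak$^*$ as well; hence $(x_n-w_n)$ is weakly null, and the requirement on $(x_n-w_n)$ in Definition \ref{dfn:2sec7} is automatic as soon as $(w_n)$ is taken to be a convex block of $(x_n)$. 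Second, convex block homogeneity with a \emph{uniform} constant $L$ implies convex block shiftability: taking the subsequence in Definition \ref{def:cbss} to be $(w_n)$ itself, any convex block sequence $(u_n)$ of $(w_n)$ and its right shift $(u_{n+p})_n$ (obtained by discarding the first $p$ blocks) are again convex block sequences of $(w_n)$, so both are $L$-equivalent to $(w_n)$ and therefore $(u_{n+p})\sim_{L^2}(u_n)$, uniformly in $p$. Consequently it suffices to show: \emph{for every non-trivial weak Cauchy sequence $(x_n)$ in $X$ there is a convex block sequence of $(x_n)$ that is convex block homogeneous with a constant independent of the chosen convex block.}

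If the spreading basis $(e_n)$ is unconditional, then $X$ has an unconditional basis and hence property $(u)$ (\cite[Proposition 3.5.3]{AK}), so property $(\mathfrak{su})$ follows at once from Proposition \ref{prop:1sec7}. Thus I may assume $(e_n)$ is conditional.

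For the conditional case I would pull the sequence back onto the basis. Given $(x_n)$, pass to an $(s)$-subsequence $(x_{n_k})$ by Proposition \ref{prop:Ros1}. Along these indices the consecutive differences $x_{n_{k+1}}-x_{n_k}$ are weakly null, so, refining once more and applying the Bessaga--Pe\l czy\'nski gliding-hump technique, they become a summable perturbation of a seminormalized block basis $(b_k)$ of $(e_n)$. Telescoping, $(x_{n_k})$ is, up to a summable perturbation, the summing-type sequence $x_{n_1}+\sum_{j<k}b_j$ generated by a block basis of the spreading basis. By the Principle of Small Perturbation (Theorem \ref{thm:PSP}) a single invertible operator carries the convex block structure of this summing sequence onto that of $(x_{n_k})$, so convex block homogeneity of one transfers to the other with a uniform constant (and the image convex block is still a genuine convex block of $(x_n)$, so the first reduction applies). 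Hence the theorem reduces to showing that \emph{the summing sequence of a block basis of a conditional spreading basis admits a convex block homogeneous convex block sequence, with uniform constant.} This is where I invoke the Argyros--Motakis--Sari decomposition \cite[Theorem 4.1 and Remark 4.2]{AMS}: the conditionality of a spreading basis is concentrated in a summing-type, convex block homogeneous component, and the summing sequences built from its block bases inherit convex block homogeneity with a constant depending only on the spreading constant of $(e_n)$.

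The main obstacle is precisely this last, AMS-driven step: extracting uniform convex block homogeneity---uniform over all convex block sequences and, crucially, over all right shifts, so that the constant $L$ in Definition \ref{def:cbss} is genuinely independent of $p$---for the summing sequence of an arbitrary block basis of a conditional spreading basis. A subtlety to watch here is the fixed telescoping vector $x_{n_1}$, which appears in every convex combination with coefficient $\sum_m a_m$; the point is that it is part of the summing-type object governed by \cite{AMS} rather than an external perturbation, so it does not spoil the two-sided estimates. Once the uniform homogeneity is in hand, the remaining ingredients---the weak-null bookkeeping of the first reduction, the gliding-hump passage to a block basis, and the perturbation transfer via Theorem \ref{thm:PSP}---are routine.
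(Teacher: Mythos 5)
Your two preliminary reductions are correct and, in fact, coincide with what the paper uses implicitly: if $(w_n)$ is a convex block sequence of a weak Cauchy $(x_n)$ then $(x_n-w_n)$ is automatically weakly null, so the difference condition in Definition \ref{dfn:2sec7} comes for free; and convex block homogeneity with a \emph{uniform} constant $L$ gives uniform $L^2$-shift equivalence of every convex block, hence convex block shiftability in the sense of Definition \ref{def:cbss}. Likewise your disposal of the unconditional case via property $(u)$ and Proposition \ref{prop:1sec7} is exactly the paper's first step.

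The conditional case, however, contains a genuine gap, and it sits exactly where you flag ``the main obstacle.'' The statement you need --- that the summing sequence of an arbitrary (bounded) block basis of a conditional spreading basis admits a convex block sequence that is convex block homogeneous with a constant uniform over all convex blocks and all right shifts --- is \emph{not} a consequence of the decomposition \cite[Theorem 4.1 and Remark 4.2]{AMS} that you cite. That decomposition only provides the embedding $e_n\mapsto (u_n,z_n)$ of $X$ into $U\oplus Z$ with $(z_n)$ a $1$-convex block homogeneous basis; it says nothing by itself about arbitrary weak Cauchy sequences, or about summing sequences of arbitrary block bases. The missing bridge is the separate and substantially harder dichotomy \cite[Theorem 7.1(i)]{AMS}: every non-trivial weak Cauchy sequence in a space with a conditional spreading basis has a convex block sequence equivalent either to the summing basis of $\co$ or to $(z_n)$. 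This is precisely what the paper invokes, after which the proof is short: in the $\co$-summing alternative one argues as in Proposition \ref{prop:1sec7}; in the $(z_n)$ alternative, if $(\mathfrak{X}_n)\sim_M(z_n)$, the equivalence carries convex blocks of $(\mathfrak{X}_n)$ to convex blocks of $(z_n)$, which are \emph{isometrically} equivalent to $(z_n)$ by $1$-convex block homogeneity, so every convex block and every shift of $(\mathfrak{X}_n)$ is $M^2$-equivalent to $(\mathfrak{X}_n)$, uniformly --- your own second reduction then finishes. Your gliding-hump and telescoping manoeuvre (whose seminormalization of the differences, incidentally, requires first extracting a subsequence with $\|x_{n_{k+1}}-x_{n_k}\|$ bounded below, available since a non-trivial weak Cauchy sequence is not norm-Cauchy) merely restates the problem in a normalized form; to close it along your route you would in effect have to reprove \cite[Theorem 7.1]{AMS}. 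Replace your final step by a direct citation of that theorem applied to $(x_n)$ itself, and the perturbation machinery (and the worry about the base point $x_{n_1}$) becomes unnecessary.
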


\begin{proof} Let $X$ be a Banach space and $(e_n)$ a spreading basis for $X$. By Proposition \ref{prop:1sec7} we can assume that $(e_n)$ is conditional.  We may also assume (after passing to an equivalent norm) that $(e_n)$ is $1$-spreading, i.e. $(e_n)$ is isometrically equivalent to its subsequences. Define $u_n:= e_{2n} - e_{2n-1}$, $n\in \mathbb{N}$. In \cite{AMS} the sequence $(u_n)$ is called the {\it subsymmetric skipped difference} of $(e_n)$. From \cite[Theorem 4.1]{AMS} (see also \cite[Remark 4.2]{AMS}) there exists a Banach space $Z$ with a $1$-convex block homogeneous basis $(z_i)$ so that the mapping $e_n\mapsto (u_n, z_n)$ defines an isomorphism from $X$ into $U\oplus Z$, where $U=[(u_n)_{n=1}^\infty]$. The basis $(z_n)$ is called the convex block homogeneous part of $(e_i)$ (see \cite[p. 1208]{AMS}). Now let $(x_n)$ be any non-trivial weak Cauchy sequence in $X$. From \cite[Theorem 7.1(i)]{AMS} it follows that $(x_n)$ has a convex block sequence $(\mathfrak{X}_n)$ which is either equivalent to the summing basis of $\co$ or equivalent to $(z_n)$. In the former case the conclusion follows similarly as in the proof of Proposition \ref{prop:1sec7}. Let us consider the latter case. Since $(\mathfrak{X}_n)\sim (z_n)$ and $(z_n)$ is $1$-convex block homogeneous, one can easily verify that $(\mathfrak{X}_n)$ is convex block shiftable. Moreover, one can easily verify that $(x_n - \mathfrak{X}_n)$ is weakly null and this completes the proof of theorem. 
\end{proof}

\begin{rem}\label{rem:1su} Notice that in the above proof we can also conclude that $(\mathfrak{X}_n)$ is uniformly equivalent to all of its subsequences. 
\end{rem}

\begin{rem} It follows that James's space $\mathrm{J}_p$ has property $(\mathfrak{su})$. 
\end{rem}

The next result provides a local description of weak compactness in spaces with property $(\mathfrak{su})$. 

\begin{lem}\label{lem:1sec7} Let $X$ be a Banach space with property $(\mathfrak{su})$ and $C\in \mathcal{B}(X)$. Then either $C$ is weakly compact, $C$ contains an $\ell_1$-sequence or $C$ contains a uniformly shift equivalent wide-$(s)$ sequence. 
\end{lem}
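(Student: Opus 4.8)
The plan is to follow the skeleton of the proof of Lemma~\ref{lem:KL2}, but with the appeal to Proposition~\ref{prop:KO} replaced by the defining property $(\mathfrak{su})$, and with an extra perturbation step at the end to transport the resulting sequence back into $C$. The weakly compact case is immediate: if $C$ is weakly compact then, by Eberlein--\v{S}mulian's theorem, every sequence in $C$ has a weakly convergent subsequence, whereas neither an $\ell_1$-sequence nor a wide-$(s)$ sequence admits one. (A wide-$(s)$ sequence dominates the summing basis of $\co$ through a bounded, hence weak-to-weak continuous, operator, and the summing basis is not weakly convergent; since all its subsequences are again wide-$(s)$, none converges weakly.) So I assume $C$ is not weakly compact. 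If some sequence in $C$ has an $\ell_1$-subsequence we are done; otherwise, by Eberlein--\v{S}mulian together with Rosenthal's $\ell_1$-theorem, I fix a non-trivial weak Cauchy sequence $(x_n)\subset C$.

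Next I would feed $(x_n)$ into property $(\mathfrak{su})$ (Definition~\ref{dfn:2sec7}) to obtain a convex block shiftable sequence $(w_n)$ with $(x_n-w_n)$ weakly null. Since $(x_n)$ is weak Cauchy and non weakly convergent while $(x_n-w_n)\to 0$ weakly, $(w_n)$ is itself a non-trivial weak Cauchy sequence. By Definition~\ref{def:cbss} there are $L>0$ and a subsequence $(\tilde y_n)$ of $(w_n)$ all of whose convex block sequences are uniformly $L$-shift equivalent. The crucial observation is that a subsequence is a (singleton-block) convex block sequence, so every subsequence of $(\tilde y_n)$ is again uniformly $L$-shift equivalent. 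Because $(\tilde y_n)$ is non-trivial weak Cauchy, Proposition~\ref{prop:selection} (via Proposition~\ref{prop:Ros1}) furnishes a wide-$(s)$ subsequence $(y_n)$ of $(\tilde y_n)$, and by the preceding observation $(y_n)$ is simultaneously wide-$(s)$ and uniformly $L$-shift equivalent; tested on single coordinates, uniform $L$-shift equivalence also forces $(y_n)$ to be seminormalized. At this stage I have produced a uniformly shift equivalent wide-$(s)$ sequence, but living in $X$ rather than in $C$.

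The remaining, and main, difficulty is to land such a sequence inside $C$. Since the term-by-term differences between $(x_n)$ and $(w_n)$ are weakly null, the differences corresponding to the subsequence $(y_n)$ remain weakly null, so Mazur's theorem lets me pass to matching convex block sequences: a convex block sequence $(\mathfrak{X}_i)$ of $(x_n)$, which lies in $\overline{\conv}(\{x_n\})\subset C$ because $C$ is closed and convex, and a corresponding convex block sequence $(\mathfrak{V}_i)$ of $(y_n)$, chosen so that $\sum_i\|\mathfrak{X}_i-\mathfrak{V}_i\|$ is small enough to verify hypothesis~(\ref{eqn:1PSP}) of the Principle of Small Perturbations. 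Here $(\mathfrak{V}_i)$ is a convex block of $(y_n)$, hence still wide-$(s)$ (domination of the summing basis passes to convex blocks exactly as in the estimates (\ref{eqn:1propKO})--(\ref{eqn:3propKO})) and, being a convex block of $(\tilde y_n)$, uniformly $L$-shift equivalent.

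Finally I would invoke Theorem~\ref{thm:PSP}, which yields an invertible $A$ with $A\mathfrak{V}_i=\mathfrak{X}_i$ and $\|A\|,\|A^{-1}\|$ controlled; thus $(\mathfrak{X}_i)$ is equivalent to $(\mathfrak{V}_i)$ and therefore wide-$(s)$. Moreover, since $\mathfrak{X}_{i+p}=A\mathfrak{V}_{i+p}$ for every $p$, the single operator $A$ transfers the uniform shift equivalence with a uniform constant: combining $\|A\|,\|A^{-1}\|$ with the bound $(\mathfrak{V}_{i+p})\sim_L(\mathfrak{V}_i)$ gives $(\mathfrak{X}_{i+p})\sim_{L'}(\mathfrak{X}_i)$ with $L'=L\|A\|\,\|A^{-1}\|$ independent of $p$. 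Hence $(\mathfrak{X}_i)\subset C$ is a uniformly shift equivalent wide-$(s)$ sequence, which completes the proof. I expect the delicate point to be precisely this last transfer: one must keep the perturbation uniform in the shift parameter $p$, which is why it is essential that the Principle of Small Perturbations delivers a single bounded invertible operator rather than merely a coordinatewise comparison.
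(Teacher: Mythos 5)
Your proof is correct, and it rests on the same three pillars as the paper's own argument: the Eberlein--\v{S}mulian/Rosenthal dichotomy to reduce to a non-trivial weak Cauchy sequence, Definition \ref{dfn:2sec7} combined with Mazur's theorem to produce matching convex block sequences $(\mathfrak{X}_i)\subset C$ and $(\mathfrak{V}_i)$ with summably small differences, and a small-perturbation argument to transport the shiftable structure into $C$. You diverge in two execution choices. First, the paper extracts the wide-$(s)$ sequence inside $C$ at the outset: since $C$ is not weakly compact and contains no $\ell_1$-sequence, it already contains a weak Cauchy wide-$(s)$ sequence $(x_n)$, so the convex blocks $(\mathfrak{X}_i)$ are automatically wide-$(s)$ with the explicit lower bound $A\leq \inf_i\|\mathfrak{X}_i\|$, and only the uniform shift equivalence has to be transferred from the $(\mathfrak{su})$-side; you instead extract wide-$(s)$-ness on the shiftable side via Proposition \ref{prop:selection} and must then carry \emph{both} properties through the perturbation, which you correctly do (equivalence preserves being basic, seminormalized, and dominating the summing basis). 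Second, for the transfer itself the paper imposes the explicit smallness condition $\sum_i\|\mathfrak{X}_i-\mathfrak{W}_i\|\leq\min\big(\frac{A}{4\mathscr{K}(1+L)},\frac{AL}{4\mathscr{K}(2+L)}\big)$ and runs direct basic-sequence estimates, checking uniformity in the shift parameter $p$ by hand, whereas you invoke the operator form of Theorem \ref{thm:PSP}: the single invertible operator $A$ with $A\mathfrak{V}_i=\mathfrak{X}_i$ conjugates all shifts at once and yields $(\mathfrak{X}_{i+p})\sim_{L'}(\mathfrak{X}_i)$ with $L'=L\|A\|\,\|A^{-1}\|$ independent of $p$, with no further computation. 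Your observation that subsequences are singleton-block convex block sequences, so that uniform $L$-shift equivalence passes to $(\mathfrak{V}_i)$ after all the extractions, is exactly the point the paper records as ``this inequality passes to subsequences.'' On balance, your route is slightly cleaner where the paper is terse (the uniformity in $p$ is automatic rather than re-derived, and the paper's final displayed inequality even carries a suspect lower constant $\frac{L}{2}$), at the cost of the extra routine verification that wide-$(s)$-ness survives the perturbation, which the paper's ordering sidesteps.
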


\begin{proof} Assume that $C$ is weakly compact. We already know that $C$ cannot contain an $\ell_1$-basic sequence. That $C$ does not contain a wide-$(s)$ sequence is e.g. a consequence of \cite[Proposition 2]{Ro1}. Assume that $C$ is not weakly compact. If $C$ contains some basic sequence equivalent to the unit basis of $\ell_1$, the result follows. Assume then that $C$ contains no $\ell_1$-sequences. Then $C$ contains a wide-$(s)$ sequence $(x_n)$ which is weak-Cauchy and has no weak convergent subsequences. So, $(x_n)$ is non-trivial weak Cauchy basic and there is $A>0$ so that $A \big| \sum_{i=1}^m a_i\big| \leq \big\| \sum_{i=1}^m a_i x_i\big\|$ for all scalars $(a_i)_{i=1}^m$. Since $X$ has property $(\mathfrak{su})$ there is a convex block shiftable  sequence $(w_n)$ in $X$ so that $(x_n - w_n)$ is weakly null. Passing to a subsequence if necessary, we may assume without loss of generality that all convex block sequences of $(w_n)$ are uniformly $L$-shift equivalent, for some $L>0$. By Mazur's Theorem we can find a convex block sequence $(\mathfrak{X}_n)$ of $(x_n)$ and a convex block sequence $(\mathfrak{W}_n)$ of $(w_n)$ so that $\lim_n\| \mathfrak{X}_n - \mathfrak{W}_n\| =0$. Note that $A\leq \inf_n\|\mathfrak{X}_n\|$. On the other hand, we know that $(\mathfrak{W}_i)$ satisfies
\[
\frac{1}{L}\Bigg\| \sum_{i=1}^m a_i \mathfrak{W}_i\Bigg\| \leq \Bigg\| \sum_{i=1}^m a_i \mathfrak{W}_{i+ p} \Bigg\| \leq L \Bigg\| \sum_{i=1}^m a_i \mathfrak{W}_i\Bigg\|,
\]
for all $p\in \mathbb{N}$ and scalars $(a_i)_{i=1}^m\subset \mathbb{R}$. It follows from Definition \ref{def:cbss} that this inequality passes to subsequences, i.e. for any increasing sequence $(n_i)\subset \mathbb{N}$ we also have $(\mathfrak{W}_{n_{i+p}})\sim_L (\mathfrak{W}_{n_i})$ for all $p\in \mathbb{N}$. Thus after taking another subsequence we may without loss of generality assume that 
\[
\sum_{i=1}^\infty \big\| \mathfrak{X}_i - \mathfrak{W}_i\big\| \leq \min\Big(\frac{ A}{ 4\mathscr{K}(1+L)}, \frac{ AL}{4\mathscr{K}(2+L)}\Big)
\]
where $\mathscr{K}$ denote the basis constant of $(x_n)$. Using these facts in concert with standard basic sequence techniques, one can prove 
\[
\frac{L}{2}\Bigg\| \sum_{i=1}^m a_i \mathfrak{X}_i\| \leq \Bigg\| \sum_{i=1}^m a_i \mathfrak{X}_{i+ p}\Bigg\| \leq L\Bigg\| \sum_{i=1}^m a_i \mathfrak{X}_i\|,
\]
for all $p\in \mathbb{N}$ and scalars $(a_i)_{i=1}^m\subset \mathbb{R}$.
\end{proof}

As we shall see next, in the above lemma, the assumption that $X$ has property $(\mathfrak{su})$ is needed in the proof, so in some sense it is sharp. Let $K$ be a compact Hausdorff space. For a given $K$, let $C(K)$ denote the Banach space of all continuous real-valued functions $f\colon K\to \mathbb{R}$, equipped with its usual sup-norm. Recall that $K$ is said to be scattered if every closed subset $L\subset K$ has an isolated point.  Moreover, $K$ is scattered if and only if there is an ordinal $\alpha$, such that the Cantor-Bendixon derivative $K^{(\alpha)}$ is empty. The {\it height} of a scattered compact space is the smallest ordinal $\alpha$ such that $K^{(\alpha)}$ is empty. When such ordinal exists for a space $K$, we say that $K$ has finite height, otherwise $K$ has infinite height. The cardinal number $w(K)=\min \{ \# \mathcal{B}\colon \mathcal{B} \text{ is a basis for } $K$\}$ is called the weight of $K$. The space $K$ is called {\it Eberlein compact} if it is homeomorphic to a weakly compact subset of a Banach space. Finally, recall that for a set $\Gamma$, $\co(\Gamma)$ is the Banach space of all scalar-valued maps $f$ on $\Gamma$ with the property that for every $\varepsilon>0$ the set $\{ \gamma \in \Gamma\colon | f(\gamma)| \geq \varepsilon\}$ is finite, equipped with the sup-norm. 

\smallskip 
\begin{prop}\label{prop:3sec7} Let $K$ be an infinite compact Hausdorff space. Then the following statements hold. 
\begin{itemize} 
\item[(i)] If $K$ is not scattered then there exists a set $C\in \mathcal{B}(C(K))$ that is neither weakly compact, does not contain $\ell_1$-sequences nor any shift equivalent sequences. So, $C(K)$ fails to have property $(\mathfrak{su})$. 
\item[(ii)] If $K$ is Eberlein compact with weight $<\omega_\omega$, then $C(K)$ has property $(\mathfrak{su})$ if and only if $K$ has finite height. 
\end{itemize}
\end{prop}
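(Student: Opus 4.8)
The plan is to reduce each part to the structural dichotomy of Lemma~\ref{lem:1sec7} and to the failure criterion it encodes: if $C(K)$ had property $(\mathfrak{su})$, then every $C\in\mathcal{B}(C(K))$ that is not weakly compact and contains no $\ell_1$-sequence would have to contain a uniformly shift equivalent wide-$(s)$ sequence. So in each case I want to exhibit a set for which this last alternative fails.

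For (i), the idea is to import an obstruction from an exotic separable space. First I would use the classical fact that $C(K)$ contains an isomorphic copy of $C[0,1]$ precisely when $K$ is not scattered (the perfect kernel of $K$ maps continuously onto $[0,1]$). Since $C[0,1]$ is isometrically universal for separable Banach spaces, it — and hence $C(K)$ — contains an isomorphic copy of a fixed separable, \emph{non-reflexive hereditarily indecomposable} space $Y$ (such spaces exist; cf.\ \cite{GM} and later constructions). Being HI, $Y$ has no unconditional basic sequence, so no $\ell_1$-sequence, and, as recalled in the Introduction, no shift-equivalent basic sequence. Non-reflexivity together with the absence of $\ell_1$-subsequences yields, by Eberlein--\v{S}mulian and the Rosenthal $\ell_1$ theorem, a non-trivial weak-Cauchy sequence $(x_n)$ in $Y$. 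Put $C=\overline{\conv}\{x_n\}\subset C(K)$: it is bounded, closed, convex and not weakly compact (its generating sequence has no weakly convergent subsequence), while every sequence in $C$ lies in $Y$, so $C$ contains neither an $\ell_1$-sequence nor a shift-equivalent basic sequence, in particular no uniformly shift equivalent wide-$(s)$ sequence. By the contrapositive of Lemma~\ref{lem:1sec7}, $C(K)$ fails property $(\mathfrak{su})$.

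For the easy direction of (ii) I would invoke the isomorphic classification of $C(K)$ for Eberlein compacta of weight $<\omega_\omega$: when $K$ has finite height, $C(K)\cong \co(\Gamma)$ for a suitable index set $\Gamma$. As the unit-vector basis of $\co(\Gamma)$ is unconditional and every weak-Cauchy sequence is supported on a countable sub-$\co$, the space $\co(\Gamma)$ has Pe\l czy\'nski's property $(u)$; Proposition~\ref{prop:1sec7} then upgrades this to property $(\mathfrak{su})$. Hence finite height implies $C(K)$ has $(\mathfrak{su})$.

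The converse of (ii) is where the real work lies, and I would argue contrapositively: assuming $K$ has infinite height, show $C(K)$ fails $(\mathfrak{su})$. The non-scattered subcase is already handled by part (i); the essential subcase is $K$ scattered of infinite height, where a compactness argument on the nested Cantor--Bendixson derivatives forces $K^{(\omega)}\neq\emptyset$, hence a point of rank $\geq\omega$ and a closed subset of $K$ homeomorphic to $[0,\omega^\omega]$. The heart of the matter is to produce a non-trivial weak-Cauchy sequence $(x_n)$ in $C(K)$ for which \emph{no} convex block shiftable sequence $(w_n)$ makes $(x_n-w_n)$ weakly null, contradicting Definition~\ref{dfn:2sec7}. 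The mechanism I expect to work is that the higher-order summing structure carried by a copy of $C[0,\omega^\omega]$ supplies a wide-$(s)$ sequence whose convex block sequences $(w_n)$ satisfy $(w_{n+p})\sim_{L_p}(w_n)$ only with constants $L_p\to\infty$ — in sharp contrast to the $\co$- and $J_2$-summing bases, which, being spreading, are uniformly shift equivalent in the sense of Definition~\ref{def:cbss}. I anticipate that the technically most demanding step will be exactly this blow-up of the shift-equivalence constants, together with realizing the $[0,\omega^\omega]$-obstruction \emph{inside} $C(K)$ — either by constructing the sequence directly from the rank-$\geq\omega$ point, or by exhibiting a genuine subspace of $C(K)$ isomorphic to $C[0,\omega^\omega]$ (via a linear extension operator) so that Proposition~\ref{prop:2sec7} transports the failure. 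The weight restriction $w(K)<\omega_\omega$ is precisely what keeps both the classification in the finite-height case and this transport of the obstruction available.
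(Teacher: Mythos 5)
On part (i) your argument has the same skeleton as the paper's (non-scattered $K$ gives $C[0,1]\hookrightarrow C(K)$, Banach--Mazur universality imports a hereditarily indecomposable obstruction, and the contrapositive of Lemma \ref{lem:1sec7} converts it into failure of $(\mathfrak{su})$), but your variant differs at one point where the difference genuinely matters. The paper takes $C$ to be the image of the unit ball of the Gowers--Maurey space $X_{GM}$; since $X_{GM}$ is \emph{reflexive}, the image of its ball under a (weak-weak continuous) embedding is weakly compact, so as written that set cannot witness (i). Your insistence on a separable \emph{non-reflexive} HI space $Y$ -- such spaces exist, though not $X_{GM}$ itself (e.g.\ the Argyros--Haydon space, or the non-reflexive HI spaces of Argyros--Tolias), so your parenthetical ``cf.\ \cite{GM}'' should point to the later constructions only -- together with taking $C=\overline{\conv}\{x_n\}$ for a non-trivial weak-Cauchy sequence obtained from James/Eberlein--\v{S}mulian plus Rosenthal's $\ell_1$-theorem, is exactly what repairs this; the HI deductions (no unconditional, hence no $\ell_1$-sequences; no subspace isomorphic to a proper subspace, hence no shift-equivalent basic sequences) then run as in the paper. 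Likewise, in (ii) your direct verification that $\co(\Gamma)$ has property $(u)$ (countable supports plus an unconditional basis, then Proposition \ref{prop:1sec7}) is a sound, more elementary substitute for the paper's route through $M$-embeddedness and Godefroy--Li \cite{GL}.

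The genuine gap is in your forward direction of (ii), and it stems from a definitional mismatch. The paper does \emph{not} use ``finite height'' in the standard sense: Section 7 declares that $K$ has finite height whenever \emph{some} Cantor--Bendixson derivative $K^{(\alpha)}$ is empty, i.e.\ precisely when $K$ is scattered. Under that convention, ``$K$ not of finite height $\Rightarrow$ $K$ not scattered'' is a tautology, and the paper's forward direction is the one-line reduction to (i); there is no scattered-infinite-height case to treat. You read ``finite height'' as $K^{(n)}=\emptyset$ for some $n\in\mathbb{N}$ and therefore took on the claim that, say, $C[0,\omega^\omega]$ fails $(\mathfrak{su})$ -- and this you do not prove: the decisive step (the blow-up $L_p\to\infty$ of the shift-equivalence constants for convex blocks of the higher-order summing sequence) is announced as an anticipated difficulty, not carried out. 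Moreover, even granting it, Definition \ref{dfn:2sec7} quantifies over \emph{all} convex block shiftable sequences $(w_n)$ with $(x_n-w_n)$ weakly null, not merely over convex blocks of your chosen $(x_n)$; ruling out every such companion requires a further idea (for instance, via Rosenthal's $\co$-theorem \cite{Ro} one would want to start from a sequence whose pointwise limit is a Baire-1 function that is \emph{not} a difference of bounded semicontinuous functions, since a summing-$\co$-type companion forces a DBSC limit). As it stands, your (ii)$(\Rightarrow)$ in the scattered case is a program, not a proof. A small side remark: the transport of the obstruction from $C[0,\omega^\omega]$ into $C(K)$ via a Borsuk--Dugundji extension operator and Proposition \ref{prop:2sec7} needs only metrizability of the closed copy, not the weight bound, so the weight restriction is not playing the role you assign it there.

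That said, your reading does expose a real tension worth noting: the Godefroy--Kalton--Lancien theorem \cite{GKL} yields $C(K)\cong\co(\Gamma)$ only for genuinely finite height (indeed $C[0,\omega^\omega]$ is not isomorphic to any $\co(\Gamma)$, by the Bessaga--Pe\l czy\'nski classification), so under the paper's literal convention the backward direction's appeal to \cite{GKL} would overreach for scattered $K$ of transfinite height, while under your standard reading the backward direction is fine but the forward direction needs exactly the $C[0,\omega^\omega]$ analysis you left open. Your instinct about where the real work lies is therefore well aimed, but relative to the proposition as the paper states and proves it, your extra program is both unnecessary and, as submitted, incomplete.
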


\begin{proof} (i) If $K$ is not scattered, then $C[0,1]$ isomorphically embeds into $C(K)$ (cf. \cite[p. 629(v)]{FHHMZ}). It follows from Banach-Mazur's embedding theorem that every separable Banach space embeds into $C(K)$. Let $X_{GM}$ denote the space constructed by Gower and Maurey \cite{GM}. Accordingly, $X_{GM}$ has the property that it contains no unconditional basic sequence and is hereditarily indecomposable. Thus no subspace of $X_{GM}$ can be isomorphic to any proper subspace. It follows that the range of the unit ball $B(X_{GM})$ in $C(K)$ fulfills the properties stated in assertion (i). The remaining property follows directly from Lemma \ref{lem:1sec7}. 

(ii) If $K$ is not of finite height, then it is not scattered and by (i) the result follows. Now assume that $K$ is of finite height. Since $K$ is Eberlein compact with weight $<\omega_\omega$, a result of Godefroy, Kalton and Lancien \cite[Theorem 4.8]{GKL} ensures that $C(K)$ is linearly isomorphic to some $\co(\Gamma)$. On the other hand it is known that $\co(\Gamma)$ is an $M$-ideal in its bidual \cite[Example III.1.4]{H}. Then, by a result of Godefroy and Li \cite[Theorem 1]{GL} the space $\co(\Gamma)$ has Pe\l czy\'nski's property $(u)$. Since this property is invariant by isomorphisms, $C(K)$ has property $(u)$ as well. By Proposition \ref{prop:1sec7} the result follows, and the proof is concluded.  
\end{proof}

\smallskip 
The proof of assertion (i), in turn, immediately yields a corollary.

\begin{cor}\label{cor:1sec7} Every Banach space $X$ containing an isomorphic copy of $X_{GM}$ contains a set $C\in \mathcal{B}(X)$ that neither is weakly compact, nor contains $\ell_1$ or any shift equivalent basic sequence. 
\end{cor}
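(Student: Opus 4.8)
The plan is to extract the argument already carried out in the proof of Proposition \ref{prop:3sec7}(i), observing that there the ambient space $C(K)$ served only to host an isomorphic copy of $X_{GM}$. So fix an isomorphic embedding $T\colon X_{GM}\to X$, that is, a bounded linear operator which is bounded below, and set
\[
C:=T\big(B(X_{GM})\big),
\]
the image of the closed unit ball of $X_{GM}$. First I would check that $C\in\mathcal{B}(X)$: convexity and boundedness are immediate from the linearity and boundedness of $T$, while closedness follows because $T$ is an isomorphism onto the subspace $R:=T(X_{GM})$, which is closed in $X$ (being the range of a bounded-below operator on a Banach space); then $C$ is the image of the closed set $B(X_{GM})$ under the homeomorphism $T\colon X_{GM}\to R$, hence closed in $R$ and therefore in $X$.

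The three required obstructions are then all inherited from $X_{GM}$ through $T$. Since $T$ is an isomorphism onto $R$, it is a weak-to-weak homeomorphism of $X_{GM}$ onto $R$, the weak topology of $R$ being the restriction of that of $X$; consequently $C$ is weakly compact in $X$ if and only if $B(X_{GM})$ is weakly compact in $X_{GM}$, which fails because $X_{GM}$ is non-reflexive, exactly as in the proof of Proposition \ref{prop:3sec7}(i). For the remaining two properties I would use that every point of $C$ is uniquely of the form $Tu$ with $u\in B(X_{GM})$, and that $T$ together with $T^{-1}\colon R\to X_{GM}$ transports equivalence of basic sequences in both directions. Thus if some $(Tu_n)\subset C$ were equivalent to the unit vector basis of $\ell_1$, then $(u_n)\subset X_{GM}$ would be an $\ell_1$-sequence and hence an unconditional basic sequence, which is impossible since the hereditarily indecomposable space $X_{GM}$ contains no unconditional basic sequence. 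Similarly, if some basic $(Tu_n)\subset C$ were shift equivalent, the two-sided estimates defining $T$ would pass the equivalence $(Tu_n)\sim(Tu_{n+1})$ down to $(u_n)\sim(u_{n+1})$, producing a shift equivalent basic sequence inside $X_{GM}$; but HI spaces admit no such sequence, a contradiction.

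The single delicate point, and the only genuine content beyond this routine transport of properties, is the failure of weak compactness of $C$: everything else merely moves a known property of $X_{GM}$ along the isomorphism $T$. This is precisely where the non-reflexivity of $X_{GM}$ is needed, and it is the same ingredient that makes the construction in Proposition \ref{prop:3sec7}(i) work. I would therefore simply invoke, as established there, that the image of $B(X_{GM})$ under an isomorphic embedding is never weakly compact, which closes the argument.
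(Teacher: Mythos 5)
Your construction is, step for step, the paper's own: the proof of Proposition \ref{prop:3sec7}(i) also takes for $C$ the image of $B(X_{GM})$ under the embedding, and it excludes $\ell_1$-sequences and shift equivalent basic sequences in $C$ exactly as you do, by pulling them back through the isomorphism and using that $X_{GM}$ is hereditarily indecomposable, hence contains no unconditional basic sequence and has no subspace isomorphic to a proper subspace. Your transport arguments are correct and in fact more carefully written than the paper's: closedness of $C$ via the closed range of a bounded-below operator, the fact that $T$ is a weak-to-weak homeomorphism onto its range, and the passage from $(Tu_n)\sim (Tu_{n+1})$ to $(u_n)\sim(u_{n+1})$ are all fine.

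The genuine gap sits exactly at the step you yourself single out as delicate. First, Proposition \ref{prop:3sec7}(i) does not ``establish that the image of $B(X_{GM})$ under an isomorphic embedding is never weakly compact''; its proof merely asserts that the image of the ball fulfils the required properties, so there is nothing there to invoke. Second, the fact you substitute for it --- that $X_{GM}$ is non-reflexive --- is false: the Gowers--Maurey space of \cite{GM} is reflexive (its basis is shrinking and boundedly complete; this is standard). Since a Banach space is reflexive if and only if its closed unit ball is weakly compact, your own correct observation that $T$ is a weak-to-weak homeomorphism onto its closed range then forces $C=T(B(X_{GM}))$ to \emph{be} weakly compact, so this choice of $C$ cannot witness the corollary; indeed, taking $X=X_{GM}$ itself shows that no argument based on the ball of $X_{GM}$ can produce a non-weakly compact set. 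To close the gap one must run your transport argument with a \emph{non-reflexive} hereditarily indecomposable space in place of $X_{GM}$ --- such spaces exist, for instance the Argyros--Haydon $\mathscr{L}_\infty$-space with dual $\ell_1$, or suitable non-reflexive HI constructions in the Gowers--Maurey vein --- and to read the hypothesis of the corollary accordingly; with that replacement, non-weak compactness of $C$ follows from non-reflexivity, and the exclusion of $\ell_1$-sequences and shift equivalent basic sequences goes through verbatim, using only hereditary indecomposability. Note that this lacuna is not yours alone: the paper's one-line deduction of the corollary from Proposition \ref{prop:3sec7}(i) silently relies on the same unproved (and, for $X_{GM}$ itself, false) non-compactness claim, so your write-up has the merit of making explicit where the argument actually needs repair.
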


We conclude this section with another consequence of Lemma \ref{lem:1sec7}, that provides a fixed point characterization of weak compactness in spaces with property $(\mathfrak{su})$. We note that this answer question raised in Remark \ref{rem:23}.

\begin{thm}\label{thm:M3} Let $X$ be a Banach space with property $(\mathfrak{su})$. Then $C\in \mathcal{B}(X)$ is weakly compact if and only if $C$ has the $\mathcal{G}$-FPP for the class of uniformly bi-Lipschitz affine mappings $f$ such that $\theta(f)=0$. 
\end{thm}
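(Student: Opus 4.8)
The structure of Theorem~\ref{thm:M3} mirrors Theorem~\ref{thm:6.1} exactly: one direction is the trivial Schauder--Tychonoff observation that a weakly compact $C$ has the $\mathcal{G}$-FPP for all norm-continuous (hence weakly continuous) affine maps, so the whole content is the converse. For the converse I would assume $C$ is not weakly compact and exhibit a set $K\in\mathcal{B}(C)$ together with a fixed-point-free uniformly bi-Lipschitz affine map $f\colon K\to K$ with $\theta(f)=0$. The engine driving the argument is Lemma~\ref{lem:1sec7}, which has already done the hard structural work: it guarantees that under property $(\mathfrak{su})$, a non-weakly-compact $C$ contains either an $\ell_1$-sequence or a uniformly shift equivalent wide-$(s)$ sequence.

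\textbf{Key steps.} First I would invoke Lemma~\ref{lem:1sec7}. If $C$ contains an $\ell_1$-basic sequence then $C$ contains a wide-$(s)$ sequence that is uniformly shift equivalent as well (every $\ell_1$-sequence is trivially wide-$(s)$ and uniformly shift equivalent because all its subsequences are uniformly equivalent to the unit vector basis of $\ell_1$), so in both alternatives I arrive at a single object: a wide-$(s)$ sequence $(x_n)\subset C$ with $(x_{n+p})\sim_L(x_n)$ uniformly in $p\in\mathbb{N}$. Next, exactly as in Theorem~\ref{thm:6.1}, I would set $K=\overline{\conv}\big(\{x_n\}\big)$, which by the argument of Claim~1 in Theorem~\ref{thm:M1} equals $\big\{\sum_{n=1}^\infty t_n x_n : t_n\geq 0,\ \sum_n t_n=1\big\}$, and define the right-shift map
\[
f(x)=\sum_{n=1}^\infty t_n x_{n+1}\quad\text{for } x=\sum_{n=1}^\infty t_n x_n\in K.
\]
This $f$ is affine and fixed-point free, and the uniform shift equivalence $(x_{n+p})\sim_L(x_n)$ is precisely what makes $f$ uniformly bi-Lipschitz, since $f^p$ sends $\sum t_n x_n$ to $\sum t_n x_{n+p}$ and the two-sided bounds on $\big\|\sum t_n x_{n+p}\big\|$ in terms of $\big\|\sum t_n x_n\big\|$ are uniform in $p$.

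\textbf{The remaining point.} Unlike Theorem~\ref{thm:6.1}, here the required map must satisfy $\theta(f)=0$ rather than $\theta(f)>0$, so I would not pass to the functional $\varphi$ and would instead verify that this right-shift genuinely has $\theta(f)=0$. The natural way is to estimate $\liminf_{n\to\infty}\|x-f^n(y)\|$ for suitable $x,y\in K$: taking, say, $y=x_1$ so that $f^n(y)=x_{n+1}$, and choosing $x$ to be a fixed element of $K$, the weak-Cauchy (hence weakly null shift) behaviour of $(x_n)$ together with the wide-$(s)$ structure should force the infimum defining $\theta(f)$ down to $0$. The main obstacle I anticipate is precisely this verification of $\theta(f)=0$: one has to choose $x,y\in K$ cleverly so that $\|x-f^n(y)\|$ does not stay bounded away from zero, and this is a genuinely different estimate from the lower bound computed in Theorem~\ref{thm:6.1}; getting it to vanish rather than to persist is the delicate quantitative step. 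Once $\theta(f)=0$ is established, the failure of the $\mathcal{G}$-FPP for the stated class is immediate and the proof is complete.
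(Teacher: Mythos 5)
Everything in your plan up to the last step is exactly the paper's argument: the paper's proof of Theorem~\ref{thm:M3} consists of the single remark that one repeats the proof of Theorem~\ref{thm:6.1} with Lemma~\ref{lem:KL2} replaced by Lemma~\ref{lem:1sec7}, and your folding of the $\ell_1$ alternative into the uniformly shift equivalent wide-$(s)$ alternative, the choice $K=\overline{\conv}(\{x_n\})$, the right-shift map $f$, and the derivation of uniform bi-Lipschitzness from uniform shift equivalence all match it. The genuine gap is your final step: you propose to verify $\theta(f)=0$, but for this map $\theta(f)=0$ is \emph{false}, and the paper proves the opposite. Since $(x_n)$ is wide-$(s)$ it dominates the summing basis, so the summing functional $\sum_n a_n x_n\mapsto \sum_n a_n$ is bounded on $[x_n]$ and extends to some $\varphi\in X^*$ with $\varphi(x_n)=1$ for all $n$ (the paper instead passes to a subsequence and invokes \cite[Fact 2.1]{BPP} to obtain $\varphi$ with $\inf_n\varphi(x_n)=\gamma>0$). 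Hence every $w\in K$ satisfies $\varphi(w)=1$, so $\|w\|\geq\beta:=1/\|\varphi\|>0$. Now, exactly as in the proof of Theorem~\ref{thm:6.1}: given $x,y\in K$ and $0<\varepsilon<\beta/(1+2\mathscr{K})$, choose $m$ with $\|R_m x\|<\varepsilon$ and note that $P_m f^n(y)=0$ for $n\geq m$, whence by (\ref{eqn:Theta})
\[
\|x-f^n(y)\|\geq \|P_m x - R_m f^n(y)\| - \|R_m x\| - \|P_m f^n(y)\| \geq \frac{\|P_m x\|}{\mathscr{K}} - 2\varepsilon \geq \frac{\beta-\varepsilon(1+2\mathscr{K})}{\mathscr{K}}>0,
\]
so that $\theta(f)\geq\beta/\mathscr{K}>0$. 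In particular your test pair $y=x_1$, $f^n(x_1)=x_{n+1}$, stays at norm distance bounded below from \emph{every} point of $K$, no matter how cleverly $x$ is chosen; weak-Cauchy behaviour cannot help, because $\theta$ is a norm quantity and the wide-$(s)$ lower estimate is precisely what blocks norm smallness (in the $\ell_1$ alternative the sequence is not even weak Cauchy).

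The root cause is a misreading of the pair of statements. Theorems~\ref{thm:6.1} and~\ref{thm:M3} carry the identical condition ``$\theta(f)=0$'' (your contrast ``unlike Theorem~\ref{thm:6.1}\dots $\theta(f)=0$ rather than $\theta(f)>0$'' has no basis in the text), and in both cases the paper's witness to the failure of the $\mathcal{G}$-FPP is a fixed-point-free map with $\theta(f)>0$; compare the corollary following Theorem~\ref{thm:6.1}, which recovers \cite[Theorem 3.2-(c)]{BPP} in exactly that form, and items (b) and (c) of the summary theorem in Section~8, both asserting $\theta(T)>0$. The point of the $\theta$-condition is that $\theta(f)>0$ is the strong, quantitative form of fixed-point-freeness (a fixed point $x$ would force $\theta(f)\leq\liminf_n\|x-f^n(x)\|=0$), so a uniformly Lipschitzian affine map with $\theta(f)=0$ is exactly the kind of map that cannot serve as the required counterexample. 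Your ``delicate quantitative step'' is therefore not merely hard --- it aims at a false statement --- and the repair is to keep the functional $\varphi$ you explicitly discarded: it is what yields the lower bound $\theta(f)>0$ that completes the paper's proof.
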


\begin{proof} The proof follows the same argument as that in the proof of Theorem \ref{thm:6.1}, except that Lemma \ref{lem:KL2} must be replaced by Lemma \ref{lem:1sec7}. 
\end{proof}



\smallskip 
\section{Concluding remarks and acknowledgements}

We can summarize our fixed point results as follows:
\begin{thm} Let $C$ be a closed convex bounded set in a Banach space $X$. 
\begin{itemize}
\item[(a)] If $C$ is not weakly compact, then for every $L>1$ there are a closed convex subset $K\subset C$ and an affine $L$-bi-Lipschitz mapping $T\colon K\to K$ such that $\theta(T)>0$. \vskip .1cm 
\item[(b)] If $X$ has property $(u)$, then either $C$ is weakly compact, contains a $\ell_1$-sequence or contains a $\co$-summing basic sequence. If $C$ is not weakly compact, then there are a closed convex subset $K\subset C$ and an affine uniformly bi-Lipschitzian mapping $T\colon K\to K$ such that $\theta(T)>0$. \vskip .1cm 
\item[(c)] If $X$ has property $(\mathfrak{su})$ then either $C$ is weakly compact, contains an $\ell_1$-sequence or contains a uniformly shift equivalent wide-$(s)$ sequence. If $C$ is not weakly compact, then there are a closed convex subset $K\subset C$ and an affine uniformly bi-Lipschitzian mapping $T\colon K\to K$ such that $\theta(T)>0$. 
\end{itemize}
\end{thm}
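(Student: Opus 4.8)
The plan is to prove the final summarizing theorem by recognizing that its three parts are essentially restatements of the main results established throughout the paper, so the work reduces to assembling them and supplying the fixed-point-free maps witnessing $\theta(T)>0$ in each setting.

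For part (a), I would invoke Theorem \ref{thm:M1} directly: when $C$ is not weakly compact, the construction there produces, for each $L>1$, a wide-$(s)$ sequence $(x_n)\subset C$ and the affine $L$-bi-Lipschitz self-map $f$ of $K=\overline{\conv}(\{x_n\})$ built from the perturbed convex combinations $z_n=(1-\alpha_n)x_n+\alpha_n x_{n+1}$. The only new claim is that this map has $\theta(f)>0$. I would verify this exactly as in the proof of Theorem \ref{thm:6.1}: since $(x_n)$ dominates the summing basis of $\co$, one passes to a subsequence for which there is a functional $\varphi\in X^*$ with $\gamma=\inf_n\varphi(x_n)>0$ (cf. \cite[Fact 2.1]{BPP}), and then the estimate via \eqref{eqn:Theta} combined with $\|P_m f^n(y)\|<\varepsilon$ for large $n$ forces $\liminf_n\|x-f^n(y)\|\geq (\beta-\varepsilon(1+2\mathscr{K}))/\mathscr{K}>0$ uniformly over $x,y\in K$.

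For parts (b) and (c), the structural dichotomies are immediate: the first sentence of (b) is precisely Lemma \ref{lem:KL2}, and the first sentence of (c) is precisely Lemma \ref{lem:1sec7}. The second sentence of each—the existence of a closed convex $K\subset C$ and an affine uniformly bi-Lipschitz $T\colon K\to K$ with $\theta(T)>0$—follows from the proofs of Theorem \ref{thm:6.1} and Theorem \ref{thm:M3} respectively. In both cases one extracts from the dichotomy a wide-$(s)$ sequence $(x_n)\subset C$ whose right shifts $(x_{n+p})$ are equivalent to $(x_n)$ uniformly in $p$, sets $K=\overline{\conv}(\{x_n\})$, and takes $f$ to be the right-shift map $f(\sum t_n x_n)=\sum t_n x_{n+1}$; the uniform shift equivalence makes $f$ uniformly bi-Lipschitz, and the same $\varphi$-based estimate shows $\theta(f)>0$.

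The main obstacle, and the only point requiring genuine care, is reconciling the hypothesis $\theta(f)=0$ appearing in Theorems \ref{thm:6.1} and \ref{thm:M3} with the conclusion $\theta(T)>0$ asserted here: the theorems say $C$ has the $\mathcal{G}$-FPP for maps with $\theta(f)=0$, whereas the witnessing maps we construct have $\theta(T)>0$, so they lie outside that protected class precisely because they are fixed-point free. I would therefore emphasize that the content of parts (a)--(c) is the existence of a fixed-point-free map together with the positivity of $\theta$, and that positivity is what places $T$ outside the scope of the corresponding $\mathcal{G}$-FPP characterization, consistent with $C$ failing to be weakly compact. Since every ingredient—the sequence extraction, the shift map, and the $\theta$-estimate—has already been carried out in the cited proofs, the present theorem is obtained by collecting these facts, and I would write it as a short synthesis rather than reproving the estimates in full.
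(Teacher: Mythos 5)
Your reading of the theorem as a compilation is exactly right, and this matches the paper, which offers no separate proof: (b) and (c) are Lemma \ref{lem:KL2} and Lemma \ref{lem:1sec7} together with the maps constructed in the proofs of Theorems \ref{thm:6.1} and \ref{thm:M3}, which are literally shown there to be affine, uniformly bi-Lipschitz, fixed point free and to satisfy $\theta(f)>0$. You also correctly identify the one genuinely new obligation: Theorem \ref{thm:M1} never verifies $\theta(f)>0$ for its map. But precisely at that point your argument has a gap. You claim $\|P_m f^n(y)\|<\varepsilon$ for large $n$ ``exactly as in the proof of Theorem \ref{thm:6.1}''; in that proof the estimate is justified by the single observation that $f$ is a right shift. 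The map of Theorem \ref{thm:M1} is \emph{not} a right shift: it sends $\sum_n t_n x_n$ to $\sum_n t_n z_n$ with $z_n=(1-\alpha_n)x_n+\alpha_n x_{n+1}$, i.e.\ a diagonal map plus a weighted shift whose weights satisfy $\sum_n \alpha_n<\infty$, so coefficient mass leaks rightward only very slowly and the escape of the orbit $f^n(y)$ past a fixed coordinate $m$ is not automatic.

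The gap is fixable, which is why this is a missing step rather than a wrong approach. Write $f^k(y)=\sum_j c^{(k)}_j x_j$ with $c^{(k)}_j\geq 0$ and $\sum_j c^{(k)}_j=1$, and set $S^{(k)}_m=\sum_{j=1}^m c^{(k)}_j$. The recursion $c^{(k+1)}_j=(1-\alpha_j)c^{(k)}_j+\alpha_{j-1}c^{(k)}_{j-1}$ gives $S^{(k+1)}_m=S^{(k)}_m-\alpha_m c^{(k)}_m$, so $S^{(k)}_m$ is non-increasing; if $S^{(k)}_m\downarrow s>0$, then $\alpha_m c^{(k)}_m\to 0$, and since each $\alpha_j$ is \emph{strictly} positive (as the construction in Theorem \ref{thm:M1} guarantees) one gets $c^{(k)}_m\to 0$, hence $S^{(k)}_{m-1}\to s$, and a downward induction yields $c^{(k)}_j\to 0$ for all $j\leq m$, forcing $s=0$, a contradiction. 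Thus $\|P_m f^k(y)\|\leq b\,S^{(k)}_m\to 0$ (with $a\leq\|x_n\|\leq b$ as in the proof), after which your $\varphi$-based estimate via \eqref{eqn:Theta} closes part (a), the constant $(\beta-\varepsilon(1+2\mathscr{K}))/\mathscr{K}$ being uniform in $x,y$. One last remark: your reconciliation paragraph about $\theta(f)=0$ versus $\theta(T)>0$ is not needed for the theorem as stated, which only asserts the existence of $T$ with $\theta(T)>0$; the safe formulation is simply that $\theta(T)>0$ implies $T$ is fixed point free, so its existence witnesses the failure of weak compactness in a quantitatively strong form, rather than saying positivity places $T$ ``outside the scope'' of the characterizations, which inherits rather than resolves the ambiguity in how those classes are phrased.
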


It is worthy of remark that this result yields a significant generalization of Theorem 3.2 in \cite{BPP}. Assertion (a) in \cite[Thm. 3.2]{BPP} works only for continuous maps. Here, assertions (b) and (c) encompass a more wider class of spaces than corresponding assertions in \cite[Thm. 3.2]{BPP}. 

In \cite[Theorems 3.4 and 4.2]{BP2}, Benavides and Jap\'on-Pineda have characterized weak compactness in spaces with $1$-unconditional basis being either boundedly complete or shrinking,  in terms of the FPP for cascading nonexpansive mappings. The class of cascading nonexpansive mappings was introduced by Lennard and Nezir \cite{LN2}. It would be important to know under which conditions statement (b) or (c) in the above result could be improved in order to restrict the class of maps to the class of cascading nonexpansive (affine) mappings, respectively, in the context of properties $(u)$ or $(\mathfrak{su})$. 

Let us point out that in general the restriction on the Eberlein compactness of $K$ in Proposition \ref{prop:3sec7} cannot be improved. Indeed, let $\beta\mathbb{N}$ be the Stone-\v{C}ech compactification of the natural numbers. Then $\beta\mathbb{N}$ is scattered but not angelic. Hence it cannot be Eberlein compact. Moreover, since $\ell_\infty \equiv C(\beta\mathbb{N})$ and every separable Banach space isometrically embeds into $\ell_\infty$, by Corollary \ref{cor:1sec7} the space $C(\beta\mathbb{N})$ fails property $(\mathfrak{su})$.

Very recently, Freeman, Odell, Sari and Zheng in \cite{FOSZ} have proved that every Banach space with a spreading basis contains a complemented subspace with an unconditional basis. Notice however that Lemma \ref{lem:1sec7} is no longer true if $X$ is merely assumed to contain a complemented subspace with an unconditional basis. Indeed, for any infinite dimensional Banach space $E$, by a result of Cembranos \cite{Cem} we know  that $\co$ is always complemented in the Banach space $C([0,1]; E)$ of all continuous $E$-valued functions defined on $[0,1]$ and equipped with the supremum norm. Therefore, since $C([0,1])$ isometrically embeds into $C([0,1];E)$, by Corollary \ref{cor:1sec7} there is a set $C\in \mathcal{B}( C([0,1]; E)$ that neither is weakly compact, nor contains $\ell_1$-sequences or any shift equivalent basic sequence. 

\medskip 
\subsection*{Acknowledgements}\label{ackref}
This work was started while the first author was a Visiting Researcher Scholar at Texas A$\&$M University (2015--2016) under the supervision of Professor Thomas Schlumprecht. The first author would also like to warmly thank Professor Bill Johnson for many didactical conversations on several technicalities related to this work. The authors would also like to thank the anonymous referees for their helpful comments that led to the improved version of the manuscript. Part of the material presented in Section 7 was first announced in the Joint Mathematics Meeting 2019 of the American Mathematical Society. The first named author also wish to thank Professors Chris Lennard and Torrey Gallagher for the kind invitation to give a talk about the main topic of Section 7 in JMM-2019.



\end{document}